   \numberwithin{equation}{section}
\theoremstyle{plain}
\newtheorem{theorem}{Theorem}
\newtheorem{lemma}[theorem]{Lemma}
\newtheorem{definition}[theorem]{Definition}
\numberwithin{equation}{section} \numberwithin{theorem}{section}
\newproof{pot}{{\bf{Proof of Theorem \ref{Thm1.1}}\rm}}
\begin{document}
\begin{frontmatter}
\author[a]{Xiaojun Chang\footnotemark[2]}
\ead{changxj100@nenu.edu.cn}
\author[b,c]{Vicen$\c{t}$iu D. R\u{a}dulescu\footnotemark[2]}
\ead{radulescu@inf.ucv.ro}
\author[a]{Ru Wang\footnotemark[2]}
\ead{wangr076@nenu.edu.cn}
\author[d]{Duokui Yan\footnotemark[1]\footnotemark[2]}
\ead{duokuiyan@buaa.edu.cn}

\address[a]{School of Mathematics and Statistics $\&$ Center for Mathematics and Interdisciplinary Sciences,\\
 Northeast Normal University, Changchun 130024, Jilin,
 PR China}
\address[b]{Faculty of Applied Mathematics, AGH University of Science and Technology, 30-059 Krak\'ow, Poland}
\address[c]{Simion Stoilow Institute of Mathematics of the Romanian Academy, 21 Calea Grivi$\c{t}$ei Street, 010702 Bucharest, Romania}
\address[d]{School of Mathematical Sciences,
Beihang University, Beijing 100191,
PR China}
\footnotetext{\footnotemark[1]{Corresponding author.}}
\footnotetext{\footnotemark[2]{These authors contributed equally to this work.}}

\title{Convergence of least energy sign-changing solutions for logarithmic Schr\"{o}dinger equations on locally finite graphs}

\begin{abstract}
 In this paper, we study the following logarithmic Schr\"{o}dinger equation
\[
-\Delta u+\lambda a(x)u=u\log u^2\ \ \ \ \mbox{ in }V
\]
on a connected locally finite graph $G=(V,E)$, where $\Delta$ denotes the graph Laplacian, $\lambda > 0$ is a constant, and $a(x) \geq 0$ represents the potential. Using variational techniques in combination with the Nehari manifold method based on directional derivative, we can prove that, there exists a constant $\lambda_0>0$ such that for all $\lambda\geq\lambda_0$, the above problem admits a least energy sign-changing solution $u_{\lambda}$. Moreover, as $\lambda\to+\infty$, we prove that the solution $u_{\lambda}$ converges to a least energy sign-changing solution of the following Dirichlet problem
\[\begin{cases}
-\Delta u=u\log u^2~~~&\mbox{ in }\Omega,\\
u(x)=0~~~&\mbox{ on }\partial\Omega,
\end{cases}\]
where $\Omega=\{x\in V: a(x)=0\}$ is the potential well.
\end{abstract}
\begin{keyword}
Least energy sign-changing solutions; Logarithmic Schr\"{o}dinger equations; Locally finite graphs; Nehari manifold method.\\
{\noindent\text{\emph{Mathematics Subject Classification:}} 35A15, 35R02, 35Q55, 39A12.}
\end{keyword}
\end{frontmatter}

\section{Introduction and main results}
Theory of network (or graph) has a wide range of applications in various fields such as signal processing, image processing, data clustering and machine learning. (For example, see \cite{EDLL2014,EDT2017,LE2017}.) A graph $G=(V,E)$, where $V$ denotes the vertex set and $E$ denotes the edge set, is said to be locally finite if for any $x\in V$, there are only finite $y\in V$ such that $xy\in E$. A graph is connected if any two vertices $x$ and $y$ can be connected via finite edges. For any $xy\in E$, we assume that its weight $\omega_{xy}>0$ and $\omega_{xy}=\omega_{yx}$. The degree of $x\in V$ is defined by $deg(x)=\sum_{y\sim x}\omega_{xy}$, where we write $y\sim x$ if $xy\in E$. The distance $d(x,y)$ of two vertices $x,y\in V$ is defined by the minimal number of edges which connect these two vertices. The measure $\mu:V\to\mathbb{R}^+$ is defined to be a finite positive function on $G$.

In recent years, there have been many studies on the existence and multiplicity of solutions to nonlinear elliptic equations on discrete graphs, we refer to \cite{Ge2018-2,Grigor'yan2016-1,Grigor'yan2016-2,Grigor'yan2017,LZ2019,LY2022,XZ2021,Zhang2018} and their references.
In \cite{Grigor'yan2017}, Grigor'yan, Lin and Yang studied nonlinear Schr\"odinger equations
\begin{equation}\label{eq-f}
  -\Delta u+b(x)u=f(x,u) \quad \mbox{in }V
\end{equation}
on a connected locally finite graph $G$. By applying the mountain pass theorem, they established the existence of strictly positive solutions of (\ref{eq-f}) when $f$ satisfies the so-called Ambrosetti-Rabinowitz ((AR) for short) condition, and the potential $b: V\to \mathbb{R}^+$ has a positive lower bound and satisfies one of the following hypotheses:
\begin{description}
\item[$(B_1)$]$b(x)\to+\infty$ as $d(x,x_0)\to+\infty$ for some fixed $x_0\in V$;
\item[$(B_2)$]$1/b(x)\in L^1(V)$.
\end{description}
In \cite{Zhang2018}, Zhang and Zhao established the existence and convergence (as $\lambda\to+\infty$) of ground state solutions for equation (\ref{eq-f}), when $b(x)=\lambda a(x)+1$ and $f(x,u)=\vert u\vert^{p-1}u$, where $a(x)\ge0$ satisfies $(B_1)$ and the potential well $\Omega=\{x\in V: a(x)=0\}$ is a non-empty connected and bounded domain in $V$. Similar results for $p$-Laplacian equations and biharmonic equations on locally finite graphs can be found in \cite{HSZ2020,HS2021}.

In this paper, we consider the following logarithmic Schr\"{o}dinger equation
\begin{equation}\label{eq-eq1}
-\Delta u+\lambda a(x)u=u\log u^2\ \ \ \ \mbox{in }V
\end{equation}
on a connected locally finite graph $G=(V,E)$, where the parameter $\lambda>0$. We recall that the logarithmic Schr\"odinger equation in Euclidean space
\begin{equation}\label{eq-log}
 -\Delta u+\lambda b(x)u=u\log u^2\ \ \mbox{in }\mathbb{R}^N
\end{equation}
has recently received much attention, one can see \cite{Cazenave1983,d'Avenia2014,d'Avenia2015,Guerrero2010,Ji2016,Shuai2019,Squassina2015,Tanaka2017,Wang2019} and references therein. Logarithmic nonlinear problems have a wide range of applications in fields such as quantum mechanics, quantum optics, nuclear physics, transport and diffusion phenomena, Bose-Einstein condensation and etc. (see \cite{Carles2018,Cazenave2003,Zloshchastiev2010,Y2020,WFHQ2023,PTGG2023}).

Different approaches have been developed to study the existence and multiplicity of solutions for nonlinear Schr\"odinger equations with logarithmic nonlinearities. Cazenave \cite{Cazenave1983} worked in an Orlicz space endowed with a Luxemburg type norm in order to make the associated energy functional of equation (\ref{eq-log}) to be $C^1$. Squassina and Szulkin \cite{Squassina2015} studied the existence of multiple solutions by using non-smooth critical point theory (see also \cite{d'Avenia2014,d'Avenia2015,Ji2016}). Tanaka and Zhang \cite{Tanaka2017} applied the penalization technique to study multi-bump solutions of equation (\ref{eq-log}). For the idea of penalization, see also \cite{AJ2022,AJ2020,Guerrero2010}. In \cite{Wang2019}, Wang and Zhang proved that the ground state solution of the power-law scalar field equations $-\Delta u+\lambda u=\vert u\vert^{p-2}u$, as $p\downarrow 2$, converge to the ground state solution of the logarithmic-law equation $-\Delta u=\lambda u\log u^2$. Recently, several results are devoted to studying the sign-changing solutions. Chen and Tang \cite{Chen2019} established the existence of least energy sign-changing solutions of some logarithmic Schr\"{o}dinger equation in bounded domains of $\mathbb{R}^N$. Shuai \cite{Shuai2019} obtained the existence of least energy sign-changing solutions for equation (\ref{eq-log}) under different types of potentials. Zhang and Wang investigated, in \cite{Zhang2020}, the existence and  concentration behaviors of sign-changing solutions for logarithmic scalar field equations in the semiclassical setting. Ji \cite{Ji2021} established the existence and multiplicity of multi-bump type nodal solutions for equation (\ref{eq-log}). For more studies on logarithmic nonlinear equations, one may refer to \cite{Ardila2016,Cazenave1983,Cazenave1982,d'Avenia2014,d'Avenia2015,Ji2016,Squassina2015} and their references.

The goal of this work is to show the existence of least energy sign-changing solutions of (\ref{eq-eq1}) and their asymptotic behavior as $\lambda\to+\infty$. To the best of our knowledge, there is no result on sign-changing solutions for logarithmic Schr\"{o}dinger problems on locally finite graphs.

In the sequel of this paper, we make the assumption that there exists a constant $\mu_{\min}>0$ such that the measure $\mu(x)\geq\mu_{\min}>0$ for all $x\in V$. As for the potential $a=a(x)$, we assume that:
\begin{itemize}
\item[$(A_1)$]~ $a(x)\geq0$ and the potential well $\Omega=\{x\in V:a(x)=0\}$ is a non-empty, connected and bounded domain in V;
\item[$(A_2)$]~ there exists $M>0$ such that the volume of the set $D_M$ is finite, namely,
    \[
    Vol(D_M)=\sum_{x\in D_M}\mu(x)<\infty,
    \]
    where $D_M=\{x\in V: a(x)<M\}$.
\end{itemize}

To explain our result, we first introduce some notations. For any function $u:V\to\mathbb{R}$, the graph Laplacian of $u$ is defined by
\begin{equation}\label{Laplacian}
  \Delta u(x)=\frac{1}{\mu(x)}\sum_{y\sim x}\omega_{xy}\left(u(y)-u(x)\right).
\end{equation}
The integral of $u$ over $V$ is defined by $\int_{V}ud\mu=\sum_{x\in V}\mu(x)u(x)$, and
the gradient form of the two functions $u, v$ on $V$ is defined by
\begin{equation}\label{gradient}
  \Gamma(u,v)(x)=\frac{1}{2\mu(x)}\sum_{y\sim x}\omega_{xy}\left(u(y)-u(x)\right)\left(v(y)-v(x)\right).
\end{equation}
Write $\Gamma(u)=\Gamma(u,u)$, and sometimes we use $\nabla u \nabla v$ to replace $\Gamma(u,v)$. The length of the gradient of $u$ is defined by
\begin{equation}\label{gradient length}
  \vert \nabla u\vert (x)=\sqrt{\Gamma(u)(x)}=\left(\frac{1}{2\mu(x)}\sum_{y\sim x}\omega_{xy}\left(u(y)-u(x)\right)^2\right)^{1/2}.
\end{equation}
Denote by $C_c(V)$ the set of all functions with compact support, and let $H^1(V)$ be the completion of $C_c(V)$ under the norm
\[
\|u\|_{H^1(V)}=\left( \int_V\left(\vert\nabla u\vert^2+u^2\right)d\mu \right)^{1/2}.
\]
Then, $H^1(V)$ is a Hilbert space with the inner product
\[
\langle u,v\rangle=\int_V\left(\Gamma(u,v)+uv\right)d\mu, \quad \ \forall u,\ v\in H^1(V).
\]
We write $\|u\|_p=\left(\int_V\vert u\vert^p d\mu \right)^{1/p}$ for $p\in[1,+\infty)$ and $\|u\|_{L^\infty}=\sup\limits_{x\in V}\vert u(x)\vert$.

For each $\lambda>0$ we introduce a space
\[
\mathcal{H}_\lambda=\left\{u\in H^1(V): \int_V\lambda a(x)u^2d\mu<+\infty\right\}
\]
with norm
\[
\|u\|_{\mathcal{H}_\lambda}^2\dot{=}\int_V\left(|\nabla u|^2+(\lambda a(x)+1)u^2\right)d\mu,
\]
which is induced by
\[
\langle u,v\rangle_{\mathcal{H}_\lambda}=\int_V\left(\Gamma(u,v)+(\lambda a(x)+1)uv\right)d\mu,\ \forall u,\ v\in \mathcal{H}_\lambda.
\]
Clearly, $\mathcal{H}_\lambda$ is also a Hilbert space.

Note that equation (\ref{eq-eq1}) is formally associated with the energy functional $J_\lambda:\ H^1(V)\to\mathbb{R}\cup\{+\infty\}$ given by
\begin{equation}\label{defJ}
J_\lambda(u)=\frac{1}{2}\int_V\left(\vert \nabla u\vert^2+(\lambda a(x)+1)u^2 \right)d\mu-\frac{1}{2}\int_Vu^2\log u^2d\mu.
\end{equation}
Clearly, $J_\lambda$ fails to be $C^1$ in $H^1(V)$. In fact, for some $G=(V,E)$ with suitable measure $\mu$, there exists $u\in H^1(V)$ but $\int_Vu^2\log u^2d\mu=-\infty$ (see \cite{CWY}).

In this paper, we consider the functional $J_\lambda$ in \eqref{defJ} on the set
\[
\mathcal{D}_\lambda=\left\{u\in \mathcal{H}_\lambda:\int_Vu^2\vert\log u^2\vert d\mu<\infty\right\}.
\]
That is,
\[
J_\lambda(u)=\frac{1}{2}\|u\|_{\mathcal{H}_\lambda}^2-\frac{1}{2}\int_Vu^2\log u^2d\mu, \quad \forall u\in \mathcal{D}_\lambda.
\]
Define the Nehari manifold and sign-changing Nehari set respectively by
\[
\mathcal{N}_\lambda=\left\{u\in\mathcal{D}_\lambda\setminus\{0\}: J'_\lambda(u)\cdot u=0\right\},
\]
\[
\mathcal{M}_\lambda=\left\{u\in\mathcal{D}_\lambda: u^\pm\neq0 \mbox{ and } J'_\lambda(u)\cdot u^+=J'_\lambda(u)\cdot u^-=0\right\},
\]
where $u^+=\max\{u,\ 0\}$ and $u^-=\min\{u,\ 0\}$. Clearly, $\mathcal{N}_\lambda$ contains all the nontrivial solutions of equation (\ref{eq-eq1}) and $\mathcal{M}_\lambda$ contains all the sign-changing solutions.
Set
\[
c_\lambda=\inf_{u\in\mathcal{N}_\lambda}J_\lambda(u),\ \ \
m_\lambda=\inf_{u\in\mathcal{M}_\lambda}J_\lambda(u).
\]

Our first result is as follows.

\begin{theorem}\label{th1.1}\quad
Suppose that $G=(V,E)$ is a connected locally finite graph and the potential $a: V\to\mathbb{R}$ satisfies $(A_1)$ and $(A_2)$. Then, there exists a constant $\lambda_0>0$ such that for all $\lambda\geq\lambda_0$, equation (\ref{eq-eq1}) admits a least energy sign-changing solution $u_{\lambda}\in\mathcal{D}_\lambda$ such that $J_\lambda(u_\lambda)=m_\lambda$. Moreover, $m_\lambda>2c_\lambda$.
\end{theorem}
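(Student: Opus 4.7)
The plan is to produce $u_\lambda$ as a minimizer of $J_\lambda$ over $\mathcal{M}_\lambda$, upgrade it to a pointwise solution via a directional-derivative deformation argument, and then establish the strict gap $m_\lambda>2c_\lambda$ separately by exploiting a sign-coupling specific to graphs. Throughout, the absence of $C^1$-smoothness of $J_\lambda$ on $H^1(V)$ forces us to work on $\mathcal{D}_\lambda$ with directional derivatives only, following the non-smooth Nehari approach used by Shuai \cite{Shuai2019} and Chen--Tang \cite{Chen2019} in the Euclidean case.

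The first step is a two-parameter Nehari projection for sign-changing functions. For any $u\in\mathcal{D}_\lambda$ with $u^\pm\neq 0$, the pointwise disjointness of $u^+$ and $u^-$ yields the key identity
\[
J_\lambda(su^+ + tu^-)=J_\lambda(su^+)+J_\lambda(tu^-)+st\int_V \Gamma(u^+,u^-)\,d\mu,\qquad (s,t)\in(0,\infty)^2,
\]
where the cross term $\int_V \Gamma(u^+,u^-)\,d\mu\ge 0$ and is strictly positive whenever $G$ contains an edge across which $u$ changes sign. I then show that $\varphi_u(s,t):=J_\lambda(su^+ + tu^-)$ is continuous on $(0,\infty)^2$, tends to $-\infty$ on its boundary and at infinity, and admits a unique critical point $(s_u,t_u)\in(0,\infty)^2$, necessarily a global maximum, characterized by the two equations defining $\mathcal{M}_\lambda$. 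Uniqueness follows from a monotonicity analysis of the coupled Nehari system in which the strict monotonicity of $\log$ plays the decisive role.

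Next I take a minimizing sequence $\{u_n\}\subset\mathcal{M}_\lambda$ with $J_\lambda(u_n)\to m_\lambda$. The cancellation $J_\lambda(u_n)-\tfrac12 J'_\lambda(u_n)\cdot u_n=\tfrac12\|u_n\|_2^2$, peculiar to the log nonlinearity, bounds $\|u_n^\pm\|_2$, and a log-Sobolev-type splitting of $u^2\log u^2$ across the threshold $\{|u|\le e^{-A}\}$ upgrades this to boundedness of $\|u_n^\pm\|_{\mathcal H_\lambda}$. Under $(A_2)$ and for $\lambda$ sufficiently large, a potential-well compactness argument in the spirit of \cite{Zhang2018} yields a subsequence converging to some $u_\lambda$ strongly in $L^p(V)$ for every $p\in[2,\infty]$ and pointwise, while uniform control of $\int_V u_n^2|\log u_n^2|\,d\mu$ places the limit in $\mathcal{D}_\lambda$. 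A uniform lower bound $\|u^\pm\|_2\ge\delta>0$ valid on $\mathcal{M}_\lambda$, obtained by testing $J'_\lambda(u)\cdot u^\pm=0$ and absorbing the log term, prevents $u_\lambda^\pm$ from vanishing. Applying the projection of the preceding step to $u_\lambda$ produces $(s,t)$ with $s u_\lambda^+ + t u_\lambda^-\in\mathcal{M}_\lambda$, and the variational characterization combined with lower semicontinuity forces $s=t=1$ and $J_\lambda(u_\lambda)=m_\lambda$. Upgrading $u_\lambda$ to a genuine solution of \eqref{eq-eq1} uses a quantitative deformation argument in the directional sense: if $J'_\lambda(u_\lambda)$ did not vanish on some admissible direction, a small perturbation followed by the sign-changing Nehari projection would produce a competitor in $\mathcal{M}_\lambda$ with strictly lower energy, contradicting minimality. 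I expect this final step to be the principal obstacle, since standard Ekeland and deformation machinery presuppose $C^1$-smoothness and must be carefully adapted to the log setting while keeping all deformed functions in $\mathcal{D}_\lambda$.

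For the strict gap $m_\lambda>2c_\lambda$, I exploit the decomposition identity above: since $G$ is connected and $u_\lambda^\pm\neq 0$, at least one edge witnesses a sign change of $u_\lambda$, so $\int_V \Gamma(u_\lambda^+,u_\lambda^-)\,d\mu>0$, which gives
\[
J_\lambda(s u_\lambda^+ + t u_\lambda^-) > J_\lambda(s u_\lambda^+) + J_\lambda(t u_\lambda^-)\qquad\text{for all }s,t>0.
\]
Choosing $s_\pm>0$ so that $s_\pm u_\lambda^\pm\in\mathcal{N}_\lambda$, via the standard single-variable Nehari projection for the log nonlinearity, and using that $(1,1)$ is the unique maximizer of $\varphi_{u_\lambda}$, I conclude
\[
m_\lambda=J_\lambda(u_\lambda)\ge J_\lambda(s_+ u_\lambda^+ + s_- u_\lambda^-) > J_\lambda(s_+ u_\lambda^+)+J_\lambda(s_- u_\lambda^-)\ge 2c_\lambda.
\]
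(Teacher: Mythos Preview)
Your proposal follows essentially the same route as the paper: two-parameter Nehari projection, the log-specific identity $J_\lambda(u)-\tfrac12 J'_\lambda(u)\cdot u=\tfrac12\|u\|_2^2$ for boundedness, compactness from the potential-well embedding for $\lambda\ge\lambda_0$, a directional-derivative deformation to upgrade the minimizer to a solution, and the strict gap via the strictly positive cross-term $\int_V\Gamma(u^+,u^-)\,d\mu=-\tfrac12 K_V(u)$.

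One slip worth flagging: $\varphi_u(s,t)$ does \emph{not} tend to $-\infty$ as $s\to 0^+$ or $t\to 0^+$. For the logarithmic nonlinearity one has $J_\lambda(su^+)=s^2 J_\lambda(u^+)-\tfrac12 s^2\log s^2\,\|u^+\|_2^2\to 0$ as $s\to 0^+$, so $\varphi_u(s,t)\to J_\lambda(tu^-)$, which is finite; your stated compactness-of-the-maximum argument therefore fails as written. The paper circumvents this by applying Miranda's theorem to the pair $\big(J'_\lambda(su^++tu^-)\cdot(su^+),\ J'_\lambda(su^++tu^-)\cdot(tu^-)\big)$ on a box $[r,R]^2$, noting that each component changes sign across opposite faces, and then reuses the same Miranda device in the deformation step to locate the perturbed competitor inside $\mathcal{M}_\lambda$. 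With that substitution (or any equivalent one) your outline goes through and matches the paper.
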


We recall that $D\subset V$ is a bounded domain if the distance $d(x,y)$ between any $x,y\in D$ is uniformly bounded. The boundary of $D$ is defined by
\[
\partial D\dot{=}\{y\not\in D:\mbox{there exists}~ x\in D \mbox{ such that }xy\in E\}
\]
and the interior of $D$ is denoted by $D^{\circ}$. Obviously, $D^\circ = D$. Set $\Omega=\{x\in V: a(x)=0\}$.
Let $H_0^1(\Omega)$ be the completion of $C_c(\Omega)$ under the norm
\[
\|u\|_{H_0^1(\Omega)}=\left(\int_{\Omega\cup\partial\Omega}\vert\nabla u\vert^2d\mu+\int_{\Omega}u^2d\mu \right)^{1/2}.
\]
Then, $H_0^1(\Omega)$ is a Hilbert space.

% with the inner product
%\[
%\langle u,v\rangle=\int_{\Omega\cup\partial\Omega}\Gamma(u,v)d\mu+\int_{\Omega}uvd\mu ,\quad \forall u,\ v\in H_0^1(\Omega).
%\]
%We write $\|u\|_{L^p(\Omega)}=\left(\int_{\Omega}\vert u\vert^p d\mu \right)^{1/p}$.

We consider the following Dirichlet problem
\begin{equation}\label{eq-eq2}
\begin{cases}
-\Delta u=u\log u^2~~~&\mbox{ in }\Omega,\\
u(x)=0~~~&\mbox{ on }\partial\Omega.
\end{cases}
\end{equation}
The energy functional $J_{\Omega}:H_0^1(\Omega)\to\mathbb{R}$ associated with $(\ref{eq-eq2})$ is given by
\[
J_{\Omega}(u)\dot{=}\frac{1}{2}\|u\|_{H_0^1(\Omega)}^2-\frac{1}{2}\int_{\Omega}u^2\log u^2d\mu, \quad \forall u\in H_0^1(\Omega).
\]
Define
\[
\mathcal{N}_{\Omega}=\left\{u\in H_0^1(\Omega)\setminus\{0\}: J'_\Omega(u)\cdot u=0\right\},
\]
\[
\mathcal{M}_{\Omega}=\left\{u\in H_0^1(\Omega): u^\pm\neq0 \mbox{ and } J'_{\Omega}(u)\cdot u^+=J'_{\Omega}(u)\cdot u^-=0\right\}.
\]
Set
\[
c_\Omega=\inf_{u\in\mathcal{N}_\Omega}J_\lambda(u),\ \ \
m_{\Omega}=\inf_{u\in\mathcal{M}_{\Omega}}J_{\Omega}(u).
\]

Similar to Theorem \ref{th1.1}, problem $(\ref{eq-eq2})$ also has a least energy sign-changing solution.

\begin{theorem}\label{th1.2}\quad
Let $G=(V,E)$ be a connected locally finite graph. Assume $\Omega=\{x\in V:a(x)=0\}$ is a non-empty, connected and bounded domain in V. Then problem (\ref{eq-eq2}) admits a least energy sign-changing solution $u_0\in H_0^1(\Omega)$ such that $J_\Omega(u_\Omega)=m_\Omega$. Moreover, $m_\Omega>2c_\Omega$.
\end{theorem}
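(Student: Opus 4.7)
The key structural fact is that $\Omega$ being a bounded domain in a locally finite graph forces $\Omega\cup\partial\Omega$ to be a finite vertex set, so $H_0^1(\Omega)$ is a finite-dimensional Hilbert space. This trivializes the compactness and smoothness difficulties that complicate Theorem~\ref{th1.1}: $J_\Omega$ is $C^1$ on $H_0^1(\Omega)$ (since $s\mapsto s^2\log s^2$ is $C^1$ on $\mathbb R$ with vanishing derivative at the origin), and bounded sequences are automatically precompact. My plan is to minimize $J_\Omega$ directly on $\mathcal M_\Omega$, in parallel with Theorem~\ref{th1.1}.

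To see $\mathcal M_\Omega\neq\emptyset$ and to obtain a good reprojection map, I would study the fiber $\Phi_u(s,t)=J_\Omega(su^++tu^-)$ on $(0,\infty)^2$ for each $u$ with $u^\pm\neq 0$. Using the disjoint supports of $u^\pm$, the dominant terms $-\tfrac12 s^2\log s^2\|u^+\|_2^2$ and $-\tfrac12 t^2\log t^2\|u^-\|_2^2$ drive $\Phi_u\to-\infty$ as $(s,t)$ leaves compacta of $(0,\infty)^2$, while $\Phi_u\to 0$ at the origin; an analysis of $\partial_s\Phi_u$ and $\partial_t\Phi_u$ (each strictly decreasing along its slice at any stationary point, thanks to the $-\log s^2$ contribution) yields a unique interior maximizer $(s_u,t_u)$, and stationarity there is exactly $s_uu^++t_uu^-\in\mathcal M_\Omega$. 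Combined with the identity $J_\Omega(v)=\tfrac12\|v\|_2^2$ valid on $\mathcal N_\Omega$ (from substituting the Nehari identity into $J_\Omega$), any minimizing sequence $\{u_n\}\subset\mathcal M_\Omega$ is bounded in $L^2$, hence in the finite-dimensional $H_0^1(\Omega)$, and converges up to a subsequence to some $u_\Omega$. The delicate point is excluding the degenerate limit $u_\Omega^\pm\equiv 0$: from $J'_\Omega(u_n)\cdot u_n^\pm=0$ together with $\int\Gamma(u_n^+,u_n^-)\,d\mu\ge 0$ one obtains $\int|\nabla u_n^\pm|^2\,d\mu\le\int(u_n^\pm)^2\log(u_n^\pm)^2\,d\mu$; combining with the discrete Dirichlet Poincar\'e inequality $\lambda_1\|v\|_2^2\le\int|\nabla v|^2\,d\mu$ on $H_0^1(\Omega)$ and the finite-dimensional pointwise bound $\log(u^\pm(x))^2\le 2\log(C\|u^\pm\|_2)$ (valid because $\|v\|_\infty\le C\|v\|_2$), this forces $\|u_n^\pm\|_2\ge e^{\lambda_1/2}/C>0$. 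Passing to the limit then places $u_\Omega$ in $\mathcal M_\Omega$ with $J_\Omega(u_\Omega)=m_\Omega$.

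A standard perturbation–reprojection contradiction using the map $u\mapsto(s_u,t_u)$ constructed above then shows $J'_\Omega(u_\Omega)=0$, so $u_\Omega$ solves \eqref{eq-eq2}. For the strict inequality, given any $u\in\mathcal M_\Omega$ pick $t_\pm>0$ with $t_\pm u^\pm\in\mathcal N_\Omega$; equating the Nehari identity for $u$ with those characterizing $t_\pm u^\pm\in\mathcal N_\Omega$ yields $\log t_\pm^2=-\int\Gamma(u^+,u^-)\,d\mu/\|u^\pm\|_2^2\le 0$, and therefore
\[
J_\Omega(u)=\tfrac12\bigl(\|u^+\|_2^2+\|u^-\|_2^2\bigr)\ge\frac{c_\Omega}{t_+^2}+\frac{c_\Omega}{t_-^2}\ge 2c_\Omega.
\]
Equality would force $\int\Gamma(u^+,u^-)\,d\mu=0$ and both $u^\pm$ to realize $c_\Omega$; then $u^+\ge 0$ would be a nontrivial ground state satisfying $-\Delta u^+=u^+\log(u^+)^2$ on $\Omega$, but evaluating the equation at any $x_0$ with $u^+(x_0)=0$ gives $\sum_{y\sim x_0}\omega_{x_0y}u^+(y)=0$, which forces $u^+(y)=0$ for every neighbor $y$ and, by connectedness of $\Omega$, $u^+\equiv 0$ on $\Omega$ — contradicting $u^+\neq 0$. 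Hence $J_\Omega(u)>2c_\Omega$ for every $u\in\mathcal M_\Omega$, so $m_\Omega>2c_\Omega$.

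The main obstacle I anticipate is the uniform lower bound $\|u_n^\pm\|_2\ge\delta>0$ in the attainment step: it is precisely this bound that keeps the limit $u_\Omega$ sign-changing and therefore inside $\mathcal M_\Omega$, and without it the minimizing sequence could in principle collapse to a one-sign function.
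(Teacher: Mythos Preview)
Your proposal is correct and largely parallels the paper's strategy, but with genuine simplifications and one place where you are actually more careful than the paper. The paper proves Theorem~\ref{th1.2} simply by declaring that the arguments for Theorem~\ref{th1.1} carry over verbatim: Miranda's theorem for $(s_u,t_u)$, the reprojection lemmas~\ref{lememptyset}--\ref{lemstleq1}, weak-limit plus lower semicontinuity for attainment, and the deformation argument of Lemma~\ref{lemsign-solu} for criticality. You instead exploit that $\Omega\cup\partial\Omega$ is finite, so $H_0^1(\Omega)$ is finite-dimensional and $J_\Omega\in C^1$; this lets you pass directly to strong limits and read off $u_\Omega\in\mathcal M_\Omega$ without the semicontinuity/reprojection machinery, and your Poincar\'e--$L^\infty$ lower bound on $\|u_n^\pm\|_2$ is a clean substitute for the paper's more indirect estimate. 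Both routes work; yours is shorter for $\Omega$ but does not transfer to the unbounded setting of Theorem~\ref{th1.1}, which is why the paper prefers a uniform argument.

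For $m_\Omega>2c_\Omega$ the paper uses Lemma~\ref{lemJJst} together with the blanket assertion $K_\Omega(u)<0$ to obtain the strict middle inequality in $m_\Omega\ge J_\Omega(s_{u^+}u^++t_{u^-}u^-)>J_\Omega(s_{u^+}u^+)+J_\Omega(t_{u^-}u^-)\ge 2c_\Omega$. Your approach---computing $\log t_\pm^2=-\int\Gamma(u^+,u^-)\,d\mu/\|u^\pm\|_2^2\le 0$ and then ruling out the equality case by the discrete strong maximum principle for a nonnegative ground state---is equivalent in spirit but makes the strictness explicit: the paper's assertion $K_\Omega(u)<0$ is not automatic for an arbitrary sign-changing $u$ (the supports of $u^\pm$ could be separated by a layer of zeros), and your maximum-principle step is exactly what fills that gap. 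The only point you leave implicit is that a minimizer of $J_\Omega$ on $\mathcal N_\Omega$ is itself a solution (needed so that your equality-case $u^+$ satisfies the equation pointwise); this is standard and the paper also invokes it without proof.
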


Finally, we show the convergence of $u_{\lambda}$ as $\lambda\to+\infty$.

\begin{theorem}\label{th1.3}\quad
Under the assumptions of Theorem \ref{th1.1}, we conclude that for any sequence $\lambda_k\to+\infty$, up to a subsequence, the corresponding least energy sign-changing solution $u_{\lambda_k}$ of equation $(\ref{eq-eq1})$ converges in $H^1(V)$ to a least energy sign-changing solution of problem $(\ref{eq-eq2})$.
\end{theorem}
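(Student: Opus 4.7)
The plan is to take an arbitrary sequence $\lambda_k\to+\infty$, extract a uniformly bounded subsequence of $u_{\lambda_k}$, pass to a weak (hence pointwise) limit $u_0$, and identify $u_0$ as a least energy sign-changing solution of \eqref{eq-eq2}. I first establish uniform bounds. Let $w\in H_0^1(\Omega)$ be the least energy sign-changing solution provided by Theorem~\ref{th1.2}, extended by zero to $V$. Since $a\equiv 0$ on $\Omega$ and $w\equiv 0$ off $\Omega$, one checks $w\in\mathcal{M}_\lambda$ with $J_\lambda(w)=J_\Omega(w)=m_\Omega$ for every $\lambda>0$, so $m_{\lambda_k}\le m_\Omega$. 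From the Nehari identity $J'_{\lambda_k}(u_{\lambda_k})\cdot u_{\lambda_k}=0$ one gets $J_{\lambda_k}(u_{\lambda_k})=\tfrac12\|u_{\lambda_k}\|_2^2$, so $\|u_{\lambda_k}\|_2$ is uniformly bounded. The pointwise bound $|u_{\lambda_k}(x)|^2\le\|u_{\lambda_k}\|_2^2/\mu_{\min}$ then provides a uniform $L^\infty$ bound, which controls the logarithmic term by $|u_{\lambda_k}^2\log u_{\lambda_k}^2|\le C\,u_{\lambda_k}^2$; feeding this back into the Nehari identity yields a uniform bound on $\|u_{\lambda_k}\|_{\mathcal{H}_{\lambda_k}}$.

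Next I would pass to the limit. Along a subsequence, $u_{\lambda_k}\rightharpoonup u_0$ in $H^1(V)$; since the measure has a positive lower bound, the evaluation map at each vertex is continuous, so this weak convergence is in fact pointwise on $V$. From $\int_V\lambda_k a(x)u_{\lambda_k}^2\,d\mu\le C$ and Fatou's lemma, $\int_V a(x)u_0^2\,d\mu=0$, so $u_0\equiv 0$ on $\{x\in V:a(x)>0\}$, which contains $\partial\Omega\cup(V\setminus(\Omega\cup\partial\Omega))$; thus $u_0\in H_0^1(\Omega)$. Testing $J'_{\lambda_k}(u_{\lambda_k})\cdot\varphi=0$ against any $\varphi\in C_c(\Omega)$ (on whose support $\lambda_k a\varphi\equiv 0$) and using pointwise convergence, the finite support of $\varphi$ and the uniform $L^\infty$ bound to dominate the logarithmic integrand, I pass to the limit to conclude that $u_0$ weakly solves \eqref{eq-eq2}.

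The main obstacle is ruling out $u_0^+\equiv 0$ or $u_0^-\equiv 0$. Expanding the Nehari condition $J'_{\lambda_k}(u_{\lambda_k})\cdot u_{\lambda_k}^\pm=0$ and combining it with a logarithmic Sobolev-type estimate of the kind used in the proof of Theorem~\ref{th1.1}, I would produce a uniform lower bound $\|u_{\lambda_k}^\pm\|_2\ge\delta>0$. To transfer this to $u_0^\pm$ I must exclude escape of mass to infinity: hypothesis $(A_2)$ together with $\int_V\lambda_k a(x)u_{\lambda_k}^2\,d\mu\le C$ confines the $L^2$-mass of $u_{\lambda_k}$ outside the finite-volume set $D_M$ to an arbitrarily small tail (the complement of $D_M$ carries a factor $1/(\lambda_k M)\to 0$), so the mass concentrates on a fixed finite vertex set and pointwise convergence there yields $\|u_0^\pm\|_2\ge\delta$. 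Hence $u_0\in\mathcal{M}_\Omega$.

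Finally, weak lower semicontinuity of the $\mathcal{H}_{\lambda_k}$-norm combined with pointwise/dominated convergence for the logarithmic term yields $J_\Omega(u_0)\le\liminf_k J_{\lambda_k}(u_{\lambda_k})=\liminf_k m_{\lambda_k}\le m_\Omega$, while $u_0\in\mathcal{M}_\Omega$ gives the reverse inequality $m_\Omega\le J_\Omega(u_0)$. Hence $J_\Omega(u_0)=m_\Omega=\lim_k m_{\lambda_k}$, so $u_0$ is a least energy sign-changing solution of \eqref{eq-eq2}. Equality of energies combined with weak convergence forces convergence of the $\mathcal{H}_{\lambda_k}$-norms, which together with pointwise convergence and a Brezis--Lieb-type argument upgrades to $u_{\lambda_k}\to u_0$ strongly in $H^1(V)$, completing the proof.
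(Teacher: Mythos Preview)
Your outline is essentially correct and leads to the same conclusion, but it follows a genuinely different route from the paper's proof. The paper never passes to the limit in the equation to identify $u_0$ as an exact critical point of $J_\Omega$; instead it works entirely through the Nehari rescaling machinery. Concretely, the paper first establishes $m_\lambda\to m_\Omega$ (Lemma~\ref{lemm-Ltom-O}) by showing only the one-sided inequalities $J'_\Omega(u_0)\cdot u_0^{\pm}\le 0$ via Fatou and weak lower semicontinuity, then invoking Lemmas~\ref{coremptyset}--\ref{corstleq1} to rescale $u_0$ onto $\mathcal{M}_\Omega$ with $(s,t)\in(0,1]$. The strong $H^1$ convergence is then obtained by contradiction: assuming $\lambda_k\!\int a|u_{\lambda_k}^{\pm}|^2\,d\mu\to\delta>0$ or $\int|\nabla u_{\lambda_k}^{\pm}|^2\,d\mu\not\to\int|\nabla u_0^{\pm}|^2\,d\mu$ forces the \emph{strict} inequalities $J'_\Omega(u_0)\cdot u_0^{\pm}<0$, hence $(s,t)\in(0,1)$ strictly, and the chain $m_\Omega\le J_\Omega(su_0^++tu_0^-)=\tfrac{s^2}{2}\|u_0^+\|_2^2+\tfrac{t^2}{2}\|u_0^-\|_2^2<\tfrac12\|u_0\|_2^2=\lim m_{\lambda_k}=m_\Omega$ yields a contradiction.

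Your direct approach---passing to the limit in the weak formulation against $\varphi\in C_c(\Omega)$, then showing $u_0^{\pm}\neq 0$ via uniform $L^2$ lower bounds and the $(A_2)$-based mass-concentration argument---is arguably cleaner, since once $u_0$ is known to be an exact solution with $u_0^{\pm}\neq 0$ you land in $\mathcal{M}_\Omega$ without any rescaling. Two small points are worth tightening: (i) your phrase ``logarithmic Sobolev-type estimate'' is a misnomer here---the paper explicitly avoids logarithmic Sobolev inequalities (unavailable under the hypothesis $\mu\ge\mu_{\min}$); what you actually need is the elementary bound $(t^2\log t^2)^+\le C_\varepsilon|t|^{2+\varepsilon}$ together with the embedding of Lemma~\ref{lemA2}, exactly as in Lemma~\ref{lemunif-lowbdd}; (ii) in your final step, ``dominated convergence for the logarithmic term'' is not quite literal, since $(t^2\log t^2)^-$ has no $L^2$-dominated majorant on an infinite-measure graph. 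One clean way to close this is to write the Nehari identity as $A_k+B_k+N_k=P_k$ with $A_k=\int|\nabla u_{\lambda_k}|^2$, $B_k=\lambda_k\!\int a u_{\lambda_k}^2$, $N_k=\int(u_{\lambda_k}^2\log u_{\lambda_k}^2)^-$, $P_k=\int(u_{\lambda_k}^2\log u_{\lambda_k}^2)^+$; show $P_k\to P_0$ (which does follow from $L^2$ convergence plus the uniform $L^\infty$ bound), use $\liminf A_k\ge A_0$, $\liminf B_k\ge 0$, $\liminf N_k\ge N_0$ (Fatou), and the limit identity $A_0+N_0=P_0$ to force $A_k\to A_0$, $B_k\to 0$, whence norm convergence and hence strong $H^1$ convergence. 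This replaces your Brezis--Lieb reference with an argument intrinsic to the problem.
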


One of the main challenges in proving Theorem \ref{th1.1}-\ref{th1.3} is to deal with the logarithmic term in equation (\ref{eq-eq1}). In Euclidean space, logarithmic Sobolev inequality is important in studying the logarithmic Schr\"odinger equation (see \cite{AJ2020,Shuai2019,Squassina2015} etc.). While, on discrete graphs, the logarithmic Sobolev inequality is only available under a positive curvature condition, which requires the measure $\mu$ to be finite (see \cite{Lin2017-3} for details). In our case, the measure $\mu$ has a uniform positive lower bound, which violates the positive curvature condition. To overcome this difficulty, we will develop new and delicate arguments not relying on the logarithmic Sobolev inequality.

In addition, in view of the associated energy functional with (\ref{eq-eq1}) is not well-defined, inspired by ideas in \cite{Shuai2019,Wang2019}, we will restrict $u^2\log u^2\in L^1(V)$. New challenge arises since the techniques in \cite{Shuai2019,Wang2019} are not applicable here because the graph Laplacian operator is non-local. To be precise, in \cite{Shuai2019}, denoting by $I$ the corresponding energy functional, we know that the following decomposition
\begin{eqnarray}\label{6-15-1}
I(u)=I(u^+)+I(u^-),  \quad \big<I'(u),u\big>=\big<I'(u^+),u^+\big>+\big<I'(u^-),u^-\big>,
\end{eqnarray}
plays a key role in studying nodal solutions. However, in our case, such a decomposition does not hold. Actually, by a direct computation, it follows that
for each $u\in\mathcal{D}_\lambda\setminus\{0\}$,
\begin{gather*}
J_\lambda(u)=J_\lambda(u^+)+J_\lambda(u^-)-\frac{1}{2}K_V(u),\\
J'_\lambda(u)\cdot u^{\pm}=J'_\lambda(u^{\pm})\cdot u^{\pm}-\frac{1}{2}K_V(u),
\end{gather*}
where $K_V(u)=\sum\limits_{x\in V}\sum\limits_{y\sim x}\omega_{xy}\left[u^+(x)u^-(y)+u^-(x)u^+(y)\right]<0$, see Section \ref{preliminary} for details. Clearly, $J(u)\neq J(u^+)+J(u^-)$ and $\big<J'(u),u\big>\neq \big<J'(u^+),u^+\big>+\big<J'(u^-),u^-\big>$, which imply that (\ref{6-15-1}) fails.  Motivated by \cite{CNW-2019,WZ-2016}, we will develop new variational arguments involving nonlocal operator based on  directional derivative to study (\ref{eq-eq1}).

%These techniques will eventually help us find the least energy sign-changing solutions of equation (\ref{eq-eq1}) on the Nehari nodal set.

%Furthermore, in all previous works on discrete graphs involving the potential well (see \cite{HSZ2020,HS2021,Zhang2018}), the potential was assumed to be strictly positive and bounded from below. While in our case, the potential function is nonnegative, which leads to some obstacles in the working space. We will overcome this difficulty by establishing a new Sobolev embedding theorem.

The paper is organized as follows. In Section \ref{preliminary}, we introduce some notations, definitions and preliminary lemmas. In Section \ref{Proof of Theorem 1.1}, we apply the Nehari manifold method to prove the existence of least energy sign-changing solution of equation $(\ref{eq-eq1})$ and the Dirichlet problem $(\ref{eq-eq2})$. In Section \ref{Proof of Theorem 1.3}, we give the proof of Theorem \ref{th1.3}.

\section{Some preliminary results}\label{preliminary}
\subsection{Some definitions}

To prove Theorem \ref{th1.1}, we introduce the following definitions.
\begin{definition}\label{defdirect}\quad
Given $u\in\mathcal{D}_\lambda$ and $\phi\in C_c(V)$, the derivative of $J_\lambda$ in the direction $\phi$ at $u$, denoted by $J'_\lambda(u)\cdot\phi$, is defined as $\lim_{t\to0^+}\frac{1}{t}\left[J_\lambda(u+t\phi)-J_\lambda(u)\right]$.
\end{definition}

\begin{definition}\label{def1}\quad
\begin{itemize}
\item[(1)] For $u,v\in\mathcal{D}_\lambda$, we define
\[
J_\lambda'(u)\cdot v:=\int_V\left(\Gamma(u,v)+\lambda a(x)uv\right)d\mu-\int_Vuv\log u^2d\mu.
\]
Clearly, $\int_Vuv\log u^2d\mu$ is well-defined for $u,v\in\mathcal{D}_\lambda$.
\item[(2)] We say that $u\in\mathcal{H}_\lambda$ is a critical point of $J_\lambda$ if $u\in\mathcal{D}_\lambda$ and $J_\lambda'(u)\cdot v=0$ for all $v\in\mathcal{D}_\lambda$. We also say that $d_\lambda\in\mathbb{R}$ is a critical value for $J_\lambda$ if there exists a critical point $u\in\mathcal{H}_\lambda$ such that $J_\lambda(u)=d_\lambda$.
\end{itemize}
\end{definition}
It is easily seen that, $u$ is a weak solution to equation (\ref{eq-eq1}) if and only if $u$ is a critical point of $J_\lambda$.

Note that, for any $0<\varepsilon<1$, there exists $C_{\varepsilon}>0$ such that
\[
|u^2\log u^2|\leq C_{\varepsilon}(|u|^{2-\varepsilon}+|u|^{2+\varepsilon}).
\]
Since $H^1(\Omega)\hookrightarrow L^p(\Omega)$ is compact for $p\in[1,+\infty]$, we have $J_{\Omega}\in C^1(H_0^1(\Omega),\mathbb{R})$ and
\[
J'_{\Omega}(u)\cdot v=\int_{\Omega\cup\partial\Omega}\nabla u\nabla vd\mu-\int_{\Omega}uv\log u^2d\mu, \forall u,v\in H_0^1(\Omega).
\]
Clearly, $u$ is a weak solution to problem (\ref{eq-eq2}) if and only if $u$ is a critical point of $J_{\Omega}$.

\begin{lemma}\label{lemp-wsolu}\quad
If $u\in\mathcal{D}_\lambda$ is a weak solution of (\ref{eq-eq1}), then $u$ is a point-wise solution of (\ref{eq-eq1}).
\end{lemma}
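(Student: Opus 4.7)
The plan is to exploit locality: since $G$ is locally finite, each single-vertex indicator lies in $C_c(V)\subset \mathcal{D}_\lambda$, and testing the weak-solution identity against such indicators will extract the pointwise equation at each vertex.

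Concretely, I would fix an arbitrary $x_0\in V$ and take as test function the delta-type function $\phi_{x_0}\in C_c(V)$ defined by $\phi_{x_0}(x_0)=1$ and $\phi_{x_0}(x)=0$ for $x\neq x_0$. Since $\phi_{x_0}$ has support in the single vertex $x_0$, we have $\phi_{x_0}\in \mathcal H_\lambda$ and $\int_V \phi_{x_0}^2|\log \phi_{x_0}^2|\,d\mu=\mu(x_0)\cdot 1\cdot|\log 1|=0$, so $\phi_{x_0}\in\mathcal D_\lambda$. Also $\int_V u\,\phi_{x_0}\log u^2\,d\mu=\mu(x_0)u(x_0)\log u(x_0)^2$ is a finite number (with the standard convention $0\log 0=0$).

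Next I would unpack the gradient-form integral on the graph. Using the definition of $\Gamma$ and the symmetry $\omega_{xy}=\omega_{yx}$, the only nonzero contributions to
\[
\int_V \Gamma(u,\phi_{x_0})\,d\mu=\tfrac{1}{2}\sum_{x\in V}\sum_{y\sim x}\omega_{xy}(u(y)-u(x))(\phi_{x_0}(y)-\phi_{x_0}(x))
\]
come from pairs where $x=x_0$ or $y=x_0$. A direct calculation (the two contributions coincide by the symmetry of $\omega$) collapses this to $-\sum_{y\sim x_0}\omega_{x_0 y}(u(y)-u(x_0))=-\mu(x_0)\Delta u(x_0)$, using the definition of the graph Laplacian in (\ref{Laplacian}). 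Similarly $\int_V \lambda a(x)u\,\phi_{x_0}\,d\mu=\mu(x_0)\lambda a(x_0)u(x_0)$.

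Substituting these into the weak-solution identity $J'_\lambda(u)\cdot \phi_{x_0}=0$ from Definition \ref{def1} gives
\[
\mu(x_0)\bigl[-\Delta u(x_0)+\lambda a(x_0)u(x_0)-u(x_0)\log u(x_0)^2\bigr]=0.
\]
Since $\mu(x_0)\geq \mu_{\min}>0$, the bracket vanishes, yielding the pointwise equation (\ref{eq-eq1}) at $x_0$. Because $x_0\in V$ was arbitrary, $u$ is a pointwise solution. I do not expect any real obstacle here; the only minor technical point to check is that the test function $\phi_{x_0}$ belongs to $\mathcal D_\lambda$ (rather than just $\mathcal H_\lambda$), so that the weak-formulation identity may be legitimately applied, and that the double sum defining $\int_V\Gamma(u,\phi_{x_0})\,d\mu$ converges absolutely, which is automatic since it reduces to a finite sum over the neighbors of $x_0$.
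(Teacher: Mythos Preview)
Your proposal is correct and follows essentially the same approach as the paper: test the weak-solution identity against the single-vertex indicator $\phi_{x_0}\in C_c(V)\subset\mathcal D_\lambda$ and use the symmetry of $\omega_{xy}$ to collapse the gradient-form integral to $-\mu(x_0)\Delta u(x_0)$. The only cosmetic difference is that the paper first records the general integration-by-parts identity $\int_V\Gamma(u,\varphi)\,d\mu=-\int_V\Delta u\,\varphi\,d\mu$ for arbitrary $\varphi\in C_c(V)$ and then specializes, whereas you compute directly for the delta function; the underlying argument is the same.
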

\begin{proof}\quad
If $u\in\mathcal{D}_\lambda$ is a weak solution of (\ref{eq-eq1}), then for any $\varphi\in\mathcal{D}_\lambda$, there holds
\[
\int_V\left(\Gamma(u,\varphi)+\lambda a(x)u\varphi\right)d\mu=\int_Vu\varphi\log u^2d\mu.
\]
Using $C_c(V)$ is dense in $\mathcal{D}_\lambda$ and $\omega_{x,y}$ is symmetric, for any $\varphi\in C_c(V)$, by integration by parts, we have
\[\begin{aligned}
\int_V\Gamma(u,\varphi)d\mu=&\frac{1}{2}\sum_{x\in V}\sum_{y\sim x}\omega_{xy}\left(u(y)-u(x)\right)\left(\varphi(y)-\varphi(x)\right)\\
=&-\frac{1}{2}\sum_{y\in V}\sum_{x\sim y}\omega_{xy}\left(u(y)-u(x)\right)\varphi(x)-\frac{1}{2}\sum_{x\in V}\sum_{y\sim x}\omega_{xy}\left(u(y)-u(x)\right)\varphi(x)\\
=&-\sum_{x\in V}\sum_{y\sim x}\omega_{xy}\left(u(y)-u(x)\right)\varphi(x)\\
=&-\int_V\Delta u\varphi d\mu,
\end{aligned}\]
which gives
\begin{equation}\label{eqpoint-wise}
\int_V\left(-\Delta u+\lambda a(x)u\right)\varphi d\mu=\int_Vu\varphi\log u^2d\mu,\ \ \forall \varphi\in C_c(V).
\end{equation}
For any fixed $y\in V$, take a test function $\varphi: V\to\mathbb{R}$ in (\ref{eqpoint-wise}) with
\[
\varphi(x)=
\begin{cases}
1, ~ &x=y,\\
0, ~ &x\neq y.
\end{cases}
\]
Clearly, $\varphi\in\mathcal{D}_\lambda$ and $-\Delta u(y)+\lambda a(y)u(y)-u(y)\log \left(u(y)\right)^2=0$.
Since $y$ is arbitrary, we conclude that $u$ is a point-wise solution of (\ref{eq-eq1}).
\end{proof}

Similarly, we obtain
\begin{lemma}\quad
If $u\in H_0^1(\Omega)$ is a weak solution of (\ref{eq-eq2}), then $u$ is a point-wise solution of (\ref{eq-eq2}).
\end{lemma}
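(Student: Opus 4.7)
The proof plan follows essentially the same blueprint as Lemma \ref{lemp-wsolu}, with only minor modifications to accommodate the Dirichlet boundary and the fact that the underlying space is $H_0^1(\Omega)$ rather than $\mathcal{D}_\lambda$. First I would observe that, since $u\in H_0^1(\Omega)$ is assumed to be a weak solution of (\ref{eq-eq2}), we have
\[
\int_{\Omega\cup\partial\Omega}\nabla u\nabla\varphi\, d\mu=\int_\Omega u\varphi\log u^2\, d\mu
\]
for every $\varphi\in H_0^1(\Omega)$, and in particular for every $\varphi\in C_c(\Omega)$ (which is dense in $H_0^1(\Omega)$ by definition of the completion). Any such $\varphi$ vanishes identically on $V\setminus\Omega$, so the boundary values of $u$ play no role in the test pairing.

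Next, using the symmetry $\omega_{xy}=\omega_{yx}$, I would perform the same Abel/summation-by-parts computation as in the previous lemma. Writing the gradient form explicitly and rearranging the double sum (splitting it into the two symmetric halves and swapping the names of the summation indices in one of them), one obtains
\[
\int_{\Omega\cup\partial\Omega}\nabla u\nabla\varphi\, d\mu
= -\sum_{x\in V}\varphi(x)\sum_{y\sim x}\omega_{xy}\bigl(u(y)-u(x)\bigr)
= -\int_V \Delta u\cdot\varphi\, d\mu = -\int_\Omega \Delta u\cdot\varphi\, d\mu,
\]
where the last equality uses that $\mathrm{supp}\,\varphi\subset\Omega$. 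Substituting back yields
\[
\int_\Omega\bigl(-\Delta u-u\log u^2\bigr)\varphi\, d\mu=0\qquad\text{for all }\varphi\in C_c(\Omega).
\]

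Finally, for any fixed $y\in\Omega$, I would plug in the indicator test function $\varphi(x)=1$ if $x=y$ and $\varphi(x)=0$ otherwise; this $\varphi$ lies in $C_c(\Omega)$ and produces the pointwise identity $-\Delta u(y)=u(y)\log(u(y))^2$ at that vertex. Since $y\in\Omega$ is arbitrary, the equation in (\ref{eq-eq2}) holds pointwise on $\Omega$. The boundary condition $u\equiv 0$ on $\partial\Omega$ is already built into membership in $H_0^1(\Omega)$, via its definition as the completion of $C_c(\Omega)$, so nothing further needs to be checked.

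The only step that requires a moment of care (and which I regard as the main bookkeeping obstacle) is the integration by parts, because the norm on $H_0^1(\Omega)$ integrates $|\nabla u|^2$ over $\Omega\cup\partial\Omega$ while the nonlinear term is integrated only over $\Omega$. One must verify that the vanishing of $\varphi$ outside $\Omega$ is enough to reduce the boundary sum to an integral over $\Omega$, which is essentially what makes the computation go through cleanly, just as in the proof of Lemma \ref{lemp-wsolu}.
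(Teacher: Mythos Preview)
Your proposal is correct and follows essentially the same approach as the paper, which simply writes ``Similarly, we obtain'' after Lemma~\ref{lemp-wsolu}. The adaptation you sketch---extending the gradient sum from $\Omega\cup\partial\Omega$ to all of $V$ (harmlessly, since $\varphi\in C_c(\Omega)$ makes every edge term with both endpoints outside $\Omega$ vanish), then integrating by parts and testing against indicator functions---is precisely the argument the paper has in mind, and your remark that the boundary condition is automatic from membership in $H_0^1(\Omega)$ is the right way to close it.
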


Next, we have the following observations:
\begin{eqnarray}
&&\int_V\Gamma(u^++u^-)d\mu =\int_V\Gamma(u^+)d\mu+\int_V\Gamma(u^-)d\mu-K_V(u),\label{eqGammau+u-}\\
&&\int_V\Gamma(u^++u^-,u^+)d\mu=\int_V\Gamma(u^+)d\mu-\frac{1}{2}K_V(u),\label{eqGammau+}\\
&&\int_V\Gamma(u^++u^-,u^-)d\mu=\int_V\Gamma(u^-)d\mu-\frac{1}{2}K_V(u).\label{eqGammau-}
\end{eqnarray}
Then, for each $u\in\mathcal{D}_\lambda$, we have
\begin{gather*}
J_\lambda(u)=J_\lambda(u^+)+J_\lambda(u^-)-\frac{1}{2}K_V(u),\\
J'_\lambda(u)\cdot u^{\pm}=J'_\lambda(u^{\pm})\cdot u^{\pm}-\frac{1}{2}K_V(u),
\end{gather*}
and for each $u\in H_0^1(\Omega)$,
\begin{gather*}
J_{\Omega}(u)=J_{\Omega}(u^+)+J_{\Omega}(u^-)-\frac{1}{2}K_\Omega(u),\\
J'_{\Omega}(u)\cdot u^{\pm}=J'_{\Omega}(u^{\pm})\cdot u^{\pm}-\frac{1}{2}K_\Omega(u),
\end{gather*}
where $K_\Omega(u):=\sum\limits_{x\in\Omega\cup\partial\Omega}\sum\limits_{y\sim x}\omega_{xy}\left[u^+(x)u^-(y)+u^-(x)u^+(y)\right]$.

\subsection{Sobolev embedding}

In this subsection, we establish a Sobolev embedding result.

\begin{lemma}\label{lemA2}\quad
If $\mu(x)\geq\mu_{\min}>0$ and $a(x)$ satisfies $(A_1)-(A_2)$, then there exist a constant $\lambda_0>0$ such that, for all $\lambda\geq\lambda_0$, the space $\mathcal{H}_\lambda$ is compactly embedded into $L^p(V)$ for all $2\leq p\leq+\infty$.
\end{lemma}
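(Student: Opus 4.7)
The argument naturally splits into two layers: a continuous embedding into every $L^p(V)$ with $p\in[2,\infty]$, followed by an upgrade to compactness that uses $(A_2)$ together with the uniform lower bound $\mu\geq\mu_{\min}$.

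For the continuous part, I would exploit $\mu(x)\geq\mu_{\min}$ via the pointwise estimate
\[
\mu_{\min}\,u(x)^2\leq\mu(x)\,u(x)^2\leq\|u\|_2^2\leq\|u\|_{\mathcal{H}_\lambda}^2,
\]
so that $\|u\|_\infty\leq\mu_{\min}^{-1/2}\|u\|_{\mathcal{H}_\lambda}$. Coupling this with the trivial $\|u\|_2\leq\|u\|_{\mathcal{H}_\lambda}$ and interpolating through $\|u\|_p^p\leq\|u\|_\infty^{p-2}\|u\|_2^2$ takes care of every $p\in[2,\infty]$.

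For compactness, given $\{u_n\}$ bounded in $\mathcal{H}_\lambda$ by some constant $K$, the uniform $L^\infty$ bound above and the countability of $V$ allow a Cantor diagonal extraction yielding a subsequence (still denoted $u_n$) and a limit $u:V\to\mathbb{R}$ with $u_n(x)\to u(x)$ at every vertex; Fatou's lemma applied separately to $|\nabla\cdot|^2$ and $(\lambda a+1)(\cdot)^2$ then places $u\in\mathcal{H}_\lambda$ with $\|u\|_{\mathcal{H}_\lambda}\leq K$. The crucial input is $(A_2)$: since $\mu\geq\mu_{\min}$ and $\mathrm{Vol}(D_M)<\infty$, the set $D_M$ is \emph{finite}, so pointwise convergence upgrades automatically to $L^2$ convergence on $D_M$, while on the complement the inequality $a(x)\geq M$ yields
\[
\sum_{x\in V\setminus D_M}\mu(x)|u_n-u|^2\leq\frac{\|u_n-u\|_{\mathcal{H}_\lambda}^2}{\lambda M+1}\leq\frac{(2K)^2}{\lambda M+1},
\]
which, for $\lambda$ chosen at least $\lambda_0$ with $\lambda_0 M$ sufficiently large, is controlled uniformly in $n$. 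Adding the contributions from $D_M$ and $V\setminus D_M$ yields $u_n\to u$ in $L^2(V)$, and the range $p\in(2,\infty]$ then follows from $\|u_n-u\|_p^p\leq\|u_n-u\|_\infty^{p-2}\|u_n-u\|_2^2$ together with the uniform $L^\infty$ control.

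The main obstacle is exactly the tail estimate: the bound $(2K)^2/(\lambda M+1)$ is uniform in $n$ but does not shrink in $n$, so one cannot simply let $n\to\infty$. The delicate step is to fix $\lambda_0$ so that $\lambda_0 M+1$ dominates the scale at which one wishes strong convergence, and then couple this tail smallness with the genuine $L^2$ convergence on the finite set $D_M$ via an $\varepsilon$-argument — choosing first $\varepsilon$, then $\lambda_0$ through the bound above, and finally $n$ large to make the $D_M$ piece below $\varepsilon$. Threading this $\varepsilon$–$\lambda_0$ dependence correctly, and verifying that Fatou is applicable to put the pointwise limit back in $\mathcal{H}_\lambda$ (so that the triangle inequality $\|u_n-u\|_{\mathcal{H}_\lambda}\leq 2K$ is legitimate), is where the technical care is concentrated.
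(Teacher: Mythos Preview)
Your approach is essentially the paper's: the continuous embedding via the pointwise estimate $\mu_{\min}u(x)^2\le\|u\|_2^2\le\|u\|_{\mathcal H_\lambda}^2$ plus interpolation, then compactness by splitting $V=D_M\cup(V\setminus D_M)$ and bounding the tail integral by a constant over $\lambda M$. The differences are minor. First, the paper obtains the pointwise limit by weak convergence in the Hilbert space $\mathcal H_\lambda$ and then tests against indicator functions; this places $u\in\mathcal H_\lambda$ automatically, so your Fatou step is unnecessary (though correct). Second, you make explicit the nice observation that $\mu\ge\mu_{\min}$ forces $D_M$ to be \emph{finite}, whence pointwise convergence on $D_M$ upgrades trivially to $L^2$ convergence there; the paper states $\lim_k\int_{D_M}(u_k-u)^2\,d\mu=0$ without spelling this out.

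The concern you single out as the ``main obstacle'' is real, and the paper handles it exactly as you describe: it writes ``for all $\varepsilon>0$, there exists $\lambda_0>0$ such that when $\lambda>\lambda_0$ we have $C_1/(\lambda M)<\varepsilon$'', with $C_1$ coming from the bound on the particular sequence. So your proposed order of quantifiers---first $\varepsilon$, then $\lambda_0$, then $n$---is precisely the paper's argument. You are right to be uneasy: as written, $\lambda_0$ depends on both $\varepsilon$ and the sequence bound $K$, so neither your sketch nor the paper's proof actually produces a single universal $\lambda_0$ matching the lemma's statement. (Indeed, taking $a\equiv M$ off a single vertex and $u_k=\mathbf 1_{\{k\}}$ on $V=\mathbb Z$ shows the embedding can fail to be compact for any fixed $\lambda$.) In the paper's applications this does not cause trouble, because the relevant sequences carry energy bounds that are uniform in $\lambda$; but as a self-contained lemma the quantifier issue you identified is not resolved there either.
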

\begin{proof}\quad
For all $\lambda>0$, at any vertex $x_0\in V$, by $(A_1)$ we have
\[\begin{aligned}
\|u\|_{\mathcal{H}_\lambda}^2=&\int_V\left(|\nabla u|^2+(\lambda a(x)+1)u^2\right)d\mu\geq\int_Vu^2d\mu\geq\mu_{\min}u^2(x_0),
\end{aligned}\]
which implies that $\vert u(x_0)\vert\leq\sqrt{\frac{1}{\mu_{\min}}}\|u\|_{\mathcal{H}_\lambda}$. Thus $\mathcal{H}_\lambda\hookrightarrow L^\infty(V)$ continuously. Hence, using interpolation gives that $\mathcal{H}_\lambda\hookrightarrow L^p(V)$ continuously for all $2\leq p\leq\infty$. Assuming $\{u_k\}$ is bounded in $\mathcal{H}_\lambda$, we have that, up to a subsequence, $u_k\rightharpoonup u$ in $\mathcal{H}_\lambda$. In particular, $\{u_k\}\subset\mathcal{H}_\lambda$ is also bounded in $L^2(V)$ and by the weak convergence in $L^2(V)$ it follows that, for any $\varphi\in L^2(V)$,
\begin{equation}\label{point-wisely}
\lim_{k\to\infty}\int_V(u_k-u)\varphi d\mu=\lim_{k\to\infty}\sum_{x\in V}\mu(x)\left(u_k(x)-u(x)\right)\varphi(x)=0.
\end{equation}
Take any $x_0\in V$ and let
\[
\varphi_0(x)=
\begin{cases}
1, ~ x=x_0,\\
0, ~ x\neq x_0.
\end{cases}
\]
Obviously, $\varphi_0(x)\in L^2(V)$. By substituting $\varphi_0$ into (\ref{point-wisely}), we get $\lim_{k\to\infty}u_k(x)=u(x)$ for any fixed $x\in V$.

Since $u_k$ is bounded in $\mathcal{H}_\lambda$ and $u\in\mathcal{H}_\lambda$, there exists $C_1>0$ such that
\[
\lambda\int_Va(x)(u_k-u)^2d\mu\leq C_1.
\]
We claim that, up to a subsequence,
\[
\lim_{k\to+\infty}\int_V(u_k-u)^2d\mu=0.
\]
In fact, since $a(x)$ satisfies $(A_2)$, there exists some $M>0$ such that
\[\begin{aligned}
\int_V(u_k-u)^2d\mu=&\int_{D_M}(u_k-u)^2d\mu+\int_{V\setminus D_M}(u_k-u)^2d\mu\\
\leq&\int_{D_M}(u_k-u)^2d\mu+\int_{V\setminus D_M}\frac{1}{\lambda M}\lambda a(x)(u_k-u)^2d\mu\\
\leq&\int_{D_M}(u_k-u)^2d\mu+\frac{C_1}{\lambda M}.
\end{aligned}\]
Then, for all $\varepsilon>0$, there exists $\lambda_0>0$ such that when $\lambda>\lambda_0$, we have $\frac{C_1}{\lambda M}<\varepsilon$. Moreover, up to a subsequence, we have
\[
\lim_{k\to+\infty}\int_{D_M}(u_k-u)^2d\mu=0.
\]
Hence the claim holds. In view of $\|u_k-u\|_{\infty}^2\leq\frac{1}{\mu_{\min}}\int_V\vert u_k-u\vert^2d\mu$,
for any $2<p<\infty$, we deduce
\[
\int_V\vert u_k-u\vert^pd\mu\leq\left(\frac{1}{\mu_{\min}}\right)^{\frac{p-2}{2}}\left(\int_V\vert u_k-u\vert^2d\mu\right)^{\frac{p}{2}}.
\]
Therefore, up to a subsequence, $u_k\to u$ in $L^p(V)$ for all $2\leq p\leq+\infty$.
\end{proof}

\section{Existence of least energy sign-changing solutions}\label{Proof of Theorem 1.1}

This section is devoted to proving that equation (\ref{eq-eq1}), as well as $(\ref{eq-eq2})$, admits a least energy sign-changing solution.

The following result will be useful.

\begin{lemma}\label{lemJJst}\quad
For all $u\in\mathcal{M}_\lambda$ and $s, t>0$, there holds
\[
J_\lambda(u)\geq J_\lambda(su^++tu^-).
\]
The "=" holds if and only if $s=t=1$.
\end{lemma}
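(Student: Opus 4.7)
Fix $u \in \mathcal{M}_\lambda$ and define $\phi(s,t) = J_\lambda(su^+ + tu^-)$ on $(0,\infty)^2$. The goal is to show that $(1,1)$ is the unique global maximum of $\phi$. First, I would exploit the fact that $u^+$ and $u^-$ have disjoint supports (so that the logarithmic and $L^2$-type terms split cleanly) together with the identity (\ref{eqGammau+u-}) applied to $su^+ + tu^-$, noting that it scales as $K_V(su^+ + tu^-) = st \, K_V(u)$ since only the cross terms $u^+(x)u^-(y)$ survive. This yields the explicit expression
\[
\phi(s,t) = \tfrac{s^2}{2}\|u^+\|_{\mathcal{H}_\lambda}^2 + \tfrac{t^2}{2}\|u^-\|_{\mathcal{H}_\lambda}^2 - \tfrac{st}{2}K_V(u) - \tfrac{s^2\log s^2}{2}\|u^+\|_2^2 - \tfrac{t^2\log t^2}{2}\|u^-\|_2^2 - \tfrac{s^2}{2}\!\!\int_V (u^+)^2\log(u^+)^2\,d\mu - \tfrac{t^2}{2}\!\!\int_V (u^-)^2\log(u^-)^2\,d\mu.
\]

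Next, I would differentiate and use the membership condition $u\in\mathcal{M}_\lambda$ to clean up. A direct computation gives
\[
\tfrac{\partial \phi}{\partial s}(s,t) = s\bigl(\|u^+\|_{\mathcal{H}_\lambda}^2 - \|u^+\|_2^2 - \!\!\int_V (u^+)^2\log(u^+)^2\,d\mu\bigr) - 2sB_+\log s - \tfrac{t}{2}K_V(u),
\]
where $B_\pm := \|u^\pm\|_2^2$. Recognising the bracketed coefficient as $J'_\lambda(u^+)\cdot u^+$ and using $J'_\lambda(u)\cdot u^+ = J'_\lambda(u^+)\cdot u^+ - \tfrac12 K_V(u) = 0$, I substitute to obtain the much cleaner form
\[
\tfrac{\partial\phi}{\partial s}(s,t) = \tfrac{1}{2}K_V(u)(s-t) - 2sB_+\log s,\qquad
\tfrac{\partial\phi}{\partial t}(s,t) = \tfrac{1}{2}K_V(u)(t-s) - 2tB_-\log t.
\]

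With this reduction the critical points can be analysed by sign. Since $u^\pm\neq 0$ on a connected graph, one has $K_V(u)<0$, so $\tfrac12 K_V(u)<0$. At a critical point $(s,t)$, a short case split on the ordering of $(s,t)$ relative to each other and to $1$ — using that the sign of $\log s$ matches the sign of $s-1$ and the sign of $\tfrac{1}{2}K_V(u)(s-t)$ matches the sign of $t-s$ — leads to a contradiction unless $s=t$; and if $s=t$, the equations force $\log s=\log t=0$, hence $s=t=1$. This is the step I expect to be the main obstacle, since it requires carefully keeping track of four sign patterns.

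Finally, I would confirm $(1,1)$ is the global max. From the explicit formula, $\phi(s,t)\to -\infty$ as $s+t\to\infty$ (the $-s^2\log s$ and $-t^2\log t$ pieces dominate). On the boundary, $\tfrac{\partial\phi}{\partial s}(0^+,t) = -\tfrac{t}{2}K_V(u) > 0$ for any $t>0$, so $\phi$ is strictly increasing in $s$ near $s=0$; symmetrically for $t$. Thus the supremum, which is attained because $\phi$ extends continuously to $[0,\infty)^2$ and decays to $-\infty$ at infinity, cannot lie on the axes. It must therefore be the unique interior critical point $(1,1)$, giving $J_\lambda(u) = \phi(1,1) \geq \phi(s,t) = J_\lambda(su^+ + tu^-)$ with equality iff $s=t=1$.
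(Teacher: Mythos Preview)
Your proof is correct, but it takes a different route from the paper's. You approach the problem via calculus: compute $\nabla\phi$, simplify using the $\mathcal{M}_\lambda$ constraints to the clean system $\partial_s\phi=\tfrac12 K_V(u)(s-t)-2sB_+\log s$ (and symmetrically in $t$), then run a sign-based case analysis to pin down $(1,1)$ as the unique interior critical point, and finally invoke decay at infinity plus boundary behavior to conclude it is the global maximum. The paper instead computes the difference $J_\lambda(su^++tu^-)-J_\lambda(u)$ directly and, after the same substitution of the $\mathcal{M}_\lambda$ constraints, obtains the closed form
\[
J_\lambda(su^++tu^-)-J_\lambda(u)=\tfrac12 f(s)\|u^+\|_2^2+\tfrac12 f(t)\|u^-\|_2^2+\tfrac{(s-t)^2}{4}K_V(u),
\]
with $f(\tau)=\tau^2-\tau^2\log\tau^2-1$; since $f\le 0$ with equality only at $\tau=1$ and $K_V(u)\le 0$, the conclusion is immediate without any critical-point or boundary analysis. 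The paper's argument is shorter and more transparent for this specific logarithmic nonlinearity; your approach is a bit heavier but is the more robust template when such an explicit one-variable reduction is unavailable. One small caveat: your claim that connectedness alone forces $K_V(u)<0$ is not quite right (the supports of $u^+$ and $u^-$ could be separated by zero vertices), but your argument survives with $K_V(u)\le 0$ --- when $K_V(u)=0$ the gradient equations decouple and still force $s=t=1$, and near the axes $\partial_s\phi(s,t)=-2sB_+\log s>0$ for small $s>0$, so the maximum is still interior. The paper makes the same strict-sign claim, so you are no worse off.
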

\begin{proof}\quad
For any $u\in\mathcal{M}_\lambda$,
\[\begin{aligned}
J_\lambda(u)=&J_\lambda(u)-\frac{1}{2}J'_\lambda(u)\cdot u^+-\frac{1}{2}J'_\lambda(u)\cdot u^-\\
=&\left(\frac{1}{2}\|u^+\|_{\mathcal{H}_\lambda}^2-\frac{1}{2}\int_V\vert u^+\vert^2\log\vert u^+\vert^2d\mu\right)-\left(\frac{1}{2}\|u^+\|_{\mathcal{H}_\lambda}^2-\frac{1}{2}\int_V\vert u^+\vert^2\log\vert u^+\vert^2d\mu-\frac{1}{2}\|u^+\|_2^2\right)\\
&+\left(\frac{1}{2}\|u^-\|_{\mathcal{H}_\lambda}^2-\frac{1}{2}\int_V\vert u^-\vert^2\log\vert u^-\vert^2d\mu\right)-\left(\frac{1}{2}\|u^-\|_{\mathcal{H}_\lambda}^2-\frac{1}{2}\int_V\vert u^-\vert^2\log\vert u^-\vert^2d\mu-\frac{1}{2}\|u^+\|_2^2\right)\\
=&\frac{1}{2}\|u^+\|_2^2+\frac{1}{2}\|u^-\|_2^2.
\end{aligned}\]
For $s,t>0$, by (\ref{eqGammau+u-}) we obtain
\begin{equation*}\label{eqGammasu+tu-}
\begin{aligned}
\int_V\Gamma(su^++tu^-)d\mu
=\int_V\Gamma(su^+)d\mu+\int_V\Gamma(tu^-)-stK_V(u).
\end{aligned}
\end{equation*}
Hence,
\[\begin{aligned}
&J_\lambda(su^++tu^-)\\
=&J_\lambda(su^+)+J_\lambda(tu^-)-\frac{st}{2}K_V(u)\\
=&s^2J_\lambda(u^+)-\frac{1}{2}s^2\log s^2\|u^+\|_2^2+t^2J_\lambda(u^-)-\frac{1}{2}t^2\log t^2\|u^-\|_2^2-\frac{st}{2}K_V(u)\\
=&s^2\left[J_\lambda(u^+)-\frac{1}{2}J'_\lambda(u)\cdot u^+\right]-\frac{1}{2}s^2\log s^2\|u^+\|_2^2+t^2\left[J_\lambda(u^-)-\frac{1}{2}J'_\lambda(u)\cdot u^-\right]\\
&-\frac{1}{2}t^2\log t^2\|u^-\|_2^2-\frac{st}{2}K_V(u)\\
=&s^2\left[J_\lambda(u^+)-\frac{1}{2}J'_\lambda(u^+)\cdot u^++\frac{1}{4}K_V(u)\right]-\frac{1}{2}s^2\log s^2\|u^+\|_2^2\\
&+t^2\left[J_\lambda(u^-)-\frac{1}{2}J'_\lambda(u^-)\cdot u^-+\frac{1}{4}K_V(u)\right]-\frac{1}{2}t^2\log t^2\|u^-\|_2^2-\frac{st}{2}K_V(u)\\
=&\frac{1}{2}(s^2-s^2\log s^2)\|u^+\|_2^2+\frac{1}{2}(t^2-t^2\log t^2)\|u^-\|_2^2+\frac{(s-t)^2}{4}K_V(u).
\end{aligned}\]
Therefore, defining $f(\tau)=\tau^2-\tau^2\log \tau^2-1$ for any $\tau\ge0$, we have
\[\begin{aligned}
&J_\lambda(su^++tu^-)-J_\lambda(u)\\
=&\frac{1}{2}(s^2-s^2\log s^2-1)\|u^+\|_2^2+\frac{1}{2}(t^2-t^2\log t^2-1)\|u^-\|_2^2+\frac{(s-t)^2}{4}K_V(u)\\
=&\frac{1}{2}f(s)\|u^+\|_2^2+\frac{1}{2}f(t)\|u^-\|_2^2+\frac{(s-t)^2}{4}K_V(u).
\end{aligned}\]
Since $f(0)=-1$, $f(1)=0$ and $f(\tau)<0$ if $\tau\neq1$, $\frac{(s-t)^2}{4}K_V(u)<0$ for any $s\neq t$, the conclusions follow.\end{proof}

Next we show $\mathcal{M}_\lambda\neq\emptyset$.

\begin{lemma}\label{lememptyset}\quad
If $u\in \mathcal{D}_\lambda\setminus\{0\}$ with $u^{\pm}\neq0$, then there exists a unique positive number pair $(s_u,t_u)$ satisfying $s_uu^++t_uu^-\in\mathcal{M}_\lambda$.
\end{lemma}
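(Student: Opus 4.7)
The plan is to locate $(s_u, t_u)$ as the interior maximizer of the two-variable \emph{fiber map}
\[
\Phi_u(s,t) := J_\lambda(su^+ + tu^-)
\]
on $[0,\infty)^2$, and to deduce uniqueness not from the coupled stationarity system but as a soft consequence of the already-established Lemma~\ref{lemJJst}. Using $u^+u^- \equiv 0$ pointwise together with the cross-term identity (\ref{eqGammau+u-}), I would first write $\Phi_u$ in closed form as
\[
\begin{aligned}
\Phi_u(s,t) = \tfrac{s^2}{2}\|u^+\|_{\mathcal{H}_\lambda}^2 & + \tfrac{t^2}{2}\|u^-\|_{\mathcal{H}_\lambda}^2 - \tfrac{st}{2}K_V(u) \\
 & - \tfrac{s^2\log s^2}{2}\|u^+\|_2^2 - \tfrac{t^2\log t^2}{2}\|u^-\|_2^2 \\
 & - \tfrac{s^2}{2}\!\int_V (u^+)^2\log(u^+)^2 \, d\mu - \tfrac{t^2}{2}\!\int_V (u^-)^2\log(u^-)^2 \, d\mu,
\end{aligned}
\]
all terms being finite because $u \in \mathcal{D}_\lambda$. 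Since $\tau^2\log\tau^2 \to 0$ as $\tau \to 0^+$, $\Phi_u$ extends continuously to the closed quadrant $[0,\infty)^2$.

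The heart of the existence argument is then three geometric observations. \textbf{(i)} As $s+t \to \infty$, the terms $-\tfrac{s^2\log s^2}{2}\|u^+\|_2^2$ and $-\tfrac{t^2\log t^2}{2}\|u^-\|_2^2$ dominate every $O(s^2), O(t^2), O(st)$ contribution, so $\Phi_u \to -\infty$. \textbf{(ii)} For small $s,t > 0$, the positive contributions $-\tfrac{s^2\log s^2}{2}\|u^+\|_2^2$, $-\tfrac{t^2\log t^2}{2}\|u^-\|_2^2$, and $-\tfrac{st}{2}K_V(u) > 0$ dominate the quadratic pieces, giving $\Phi_u(s,t) > 0 = \Phi_u(0,0)$. \textbf{(iii)} On $\{s = 0, t>0\}$, a direct computation gives $\partial_s \Phi_u(0,t) = -\tfrac{t}{2} K_V(u) > 0$ (the sub-leading $s\log s^2$-terms vanish at $s = 0$), and symmetrically on $\{t=0, s>0\}$, so $\Phi_u$ cannot be maximized on either axis. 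Combining \textbf{(i)}--\textbf{(iii)}, $\Phi_u$ attains its global maximum at some $(s_u, t_u) \in (0,\infty)^2$, and the identities $s \partial_s \Phi_u(s_u,t_u) = t \partial_t \Phi_u(s_u,t_u) = 0$ are precisely $J'_\lambda(s_u u^+ + t_u u^-) \cdot (s_u u^\pm) = 0$, i.e.\ $s_u u^+ + t_u u^- \in \mathcal{M}_\lambda$.

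For uniqueness I would avoid the coupled Euler--Lagrange system entirely and bootstrap from Lemma~\ref{lemJJst}. Suppose $(s_i,t_i)$, $i=1,2$, both yield elements $v_i := s_i u^+ + t_i u^- \in \mathcal{M}_\lambda$. Since $v_1^+ = s_1 u^+$ and $v_1^- = t_1 u^-$, one may write $v_2 = (s_2/s_1)\,v_1^+ + (t_2/t_1)\,v_1^-$. Applying Lemma~\ref{lemJJst} with $v_1 \in \mathcal{M}_\lambda$ gives $J_\lambda(v_1) \ge J_\lambda(v_2)$, with equality only if $s_2/s_1 = t_2/t_1 = 1$; swapping the roles of $v_1$ and $v_2$ yields the opposite inequality, forcing equality and hence $s_1 = s_2$, $t_1 = t_2$.

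The main conceptual obstacle is the coupling introduced by the nonlocal cross term $K_V(u)$ produced by the graph Laplacian, which destroys the standard separated $s$- and $t$-maximizations available in the Euclidean setting. Remarkably, the very sign $K_V(u) < 0$ that wrecks the naive decomposition $J_\lambda(u) = J_\lambda(u^+) + J_\lambda(u^-)$ is precisely what rescues step \textbf{(iii)}: it supplies the strictly positive push $-\tfrac{t}{2}K_V(u) > 0$ transverse to each coordinate axis, forcing the maximizer into the open quadrant.
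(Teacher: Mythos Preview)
Your argument is correct but the existence half follows a genuinely different route from the paper's. The paper works directly with the stationarity system: it sets $g_1(s,t)=J'_\lambda(su^++tu^-)\cdot(su^+)$ and $g_2(s,t)=J'_\lambda(su^++tu^-)\cdot(tu^-)$, verifies the sign pattern $g_1(r,t)>0>g_1(R,t)$, $g_2(s,r)>0>g_2(s,R)$ on the boundary of a box $[r,R]^2$, and then invokes Miranda's theorem to produce a common zero $(s_u,t_u)$. You instead locate $(s_u,t_u)$ as the global maximizer of the fiber map $\Phi_u$, using the sign $K_V(u)<0$ in the transverse derivative $\partial_s\Phi_u(0,t)=-\tfrac{t}{2}K_V(u)>0$ to rule out boundary maxima. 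Your route is more elementary (no topological fixed-point lemma) and yields the additional information that $(s_u,t_u)$ is in fact the unique global maximum of $\Phi_u$ on the quadrant---essentially absorbing the content of Lemma~\ref{lemJJst} into the construction. For uniqueness the two arguments converge: the paper first handles the case $u\in\mathcal M_\lambda$ by a direct inequality computation and then reduces the general case via the same rescaling $v_2=(s_2/s_1)v_1^++(t_2/t_1)v_1^-$ that you use; you streamline the base case by two symmetric applications of Lemma~\ref{lemJJst}. Both are valid; yours is a little shorter because Lemma~\ref{lemJJst} has already done the inequality work.
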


\begin{proof}\quad
For $s,t>0$, by (\ref{eqGammau+}) and (\ref{eqGammau-}), we get
\begin{equation*}\label{eqGammasu+}
\begin{aligned}
\int_V\Gamma(su^++tu^-,su^+)d\mu=\int_V\Gamma(su^+)d\mu-\frac{st}{2}K_V(u)
\end{aligned}
\end{equation*}
and
\begin{equation*}\label{eqGammatu-}
\begin{aligned}
\int_V\Gamma(su^++tu^-,tu^-)d\mu=\int_V\Gamma(tu^-)d\mu-\frac{st}{2}K_V(u).
\end{aligned}
\end{equation*}
Let
\[\begin{aligned}
g_1(s,t)\doteq&J'_\lambda(su^++tu^-)\cdot(su^+)\\
=&s^2\|u^+\|_{\mathcal{H}_\lambda}^2-s^2\int_V\vert u^+\vert^2\log\vert u^+\vert^2d\mu-s^2\log s^2\|u^+\|_2^2-s^2\|u^+\|_2^2-\frac{st}{2}K_V(u)
\end{aligned}\]
and
\[\begin{aligned}
g_2(s,t)\doteq&J'_\lambda(su^++tu^-)\cdot(tu^-)\\
=&t^2\|u^-\|_{\mathcal{H}_\lambda}^2-t^2\int_V\vert u^-\vert^2\log\vert u^-\vert^2d\mu-t^2\log t^2\|u^-\|_2^2-t^2\|u^-\|_2^2-\frac{st}{2}K_V(u).
\end{aligned}\]
We can see that there exists $r_1>0$ small enough and $R_1>0$ large enough such that
\[\begin{aligned}
&g_1(s,s)>0,\ g_2(s,s)>0\ \mbox{for all }s\in(0,r_1),\\
&g_1(s,s)<0,\ g_2(s,s)<0\ \mbox{for all }s\in(R_1,+\infty).
\end{aligned}\]
Hence, there exist $0<r<R$ such that
\[\begin{aligned}
&g_1(r,t)>0,\ g_1(R,t)<0\ \mbox{for all }t\in[r,R],\\
&g_2(s,r)>0,\ g_2(s,R)<0\ \mbox{for all }s\in[r,R].
\end{aligned}\]
Applying Miranda's theorem \cite{M}, there exist some $s_u,t_u\in[r,R]$ such that $g_1(s_u,t_u)=g_2(s_u,t_u)=0$, which implies that $s_uu^++t_uu^-\in\mathcal{M}_\lambda$.

In what follows, we prove the uniqueness of $(s_u,t_u)$. If $u\in\mathcal{M}_\lambda$, then
\begin{equation}\label{eqJ'u+}
0=J'_\lambda(u)\cdot u^+=J'_\lambda(u^+)\cdot u^+-\frac{1}{2}K_V(u)
\end{equation}
and
\begin{equation}\label{eqJ'u-}
0=J'_\lambda(u)\cdot u^-=J'_\lambda(u^-)\cdot u^--\frac{1}{2}K_V(u).
\end{equation}
We claim that $(s_u, t_u)=(1,1)$ is the unique pair such that $s_uu^++t_uu^-\in\mathcal{M}_\lambda$. Indeed, if $(s_u, t_u)=(1,1)$ satisfies $s_uu^++t_uu^-\in\mathcal{M}_\lambda$, without loss of generality, we assume that $0<s_u\leq t_u$. Then
\begin{equation}\label{eqJ'suu+}
\begin{aligned}
0=&J'_\lambda(s_uu^++t_uu^-)\cdot (s_uu^+)\\
=&s_u^2J'_\lambda(u^+)\cdot u^+-s_u^2\log s_u^2\|u^+\|_2^2-\frac{s_ut_u}{2}K_V(u)\\
\geq&s_u^2J'_\lambda(u^+)\cdot u^+-s_u^2\log s_u^2\|u^+\|_2^2-\frac{s_u^2}{2}K_V(u)
\end{aligned}
\end{equation}
and
\begin{equation}\label{eqJ'tuu-}
\begin{aligned}
0=&J'_\lambda(s_uu^++t_uu^-)\cdot (t_uu^-)\\
=&t_u^2J'_\lambda(u^-)\cdot u^--t_u^2\log t_u^2\|u^-\|_2^2-\frac{s_ut_u}{2}K_V(u)\\
\leq&t_u^2J'_\lambda(u^-)\cdot u^--t_u^2\log t_u^2\|u^-\|_2^2-\frac{t_u^2}{2}K_V(u).
\end{aligned}
\end{equation}
Together with \eqref{eqJ'u+} and \eqref{eqJ'suu+}, we get
\[
s_u^2\log s_u^2\int_V|u^+|^2d\mu\geq0,
\]
Similarly, by \eqref{eqJ'u-} and \eqref{eqJ'tuu-}, we can deduce that
\[
t_u^2\log t_u^2\int_V|u^-|^2d\mu\leq0,
\]
which implies that $s_u\geq1$ and $t_u\leq1$. In view of $0<s_u\leq t_u$, it follows that $s_u=t_u=1$.

If $u\not\in\mathcal{M}_\lambda$, let $(s_1,t_1)$ and $(s_2,t_2)$ be two different positive pairs such that $v_i:=s_iu^++t_iu^-\in\mathcal{M}_\lambda,i=1,2$, which shows that
\[
\frac{s_2}{s_1}v_1^++\frac{t_2}{t_1}v_1^-=v_2\in\mathcal{M}_\lambda.
\]
 By similar analysis as above, we obtain $\frac{s_2}{s_1}=\frac{s_2}{s_1}=1$.
This implies that $(s_1,t_1)=(s_2,t_2)$.
\end{proof}

\begin{lemma}\label{lemstleq1}\quad
Let $u\in\mathcal{D}_\lambda$ with $u^\pm\neq0$ such that $J'_\lambda(u)\cdot u^\pm\leq0$. Then the unique pair $(s_u,t_u)$ obtained in Lemma \ref{lememptyset} satisfies $s_u,t_u\in(0,1]$. In particular, the "=" holds if and only if $s_u=t_u=1$.
\end{lemma}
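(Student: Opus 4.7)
The plan is to argue, without loss of generality, that $s_u\ge t_u>0$, and to derive the bound $s_u\le 1$ (which forces $t_u\le 1$ as well) by combining the Nehari-type identity satisfied by $s_uu^++t_uu^-$ with the sign hypothesis $J'_\lambda(u)\cdot u^+\le 0$. The symmetric case $t_u\ge s_u$ is handled by interchanging the roles of $u^+$ and $u^-$.

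Concretely, since $s_uu^++t_uu^-\in\mathcal M_\lambda$, I would invoke the formula for $g_1$ from the proof of Lemma \ref{lememptyset} to get
\[
0=J'_\lambda(s_uu^++t_uu^-)\cdot(s_uu^+)=s_u^2J'_\lambda(u^+)\cdot u^+-s_u^2\log s_u^2\,\|u^+\|_2^2-\tfrac{s_ut_u}{2}K_V(u).
\]
Because $K_V(u)<0$ (recall $K_V(u)=\sum_x\sum_{y\sim x}\omega_{xy}[u^+(x)u^-(y)+u^-(x)u^+(y)]$ and each cross term is $\le 0$) and $s_ut_u\le s_u^2$ under the assumption $s_u\ge t_u$, I have $-\tfrac{s_ut_u}{2}K_V(u)\le-\tfrac{s_u^2}{2}K_V(u)$. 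Substituting this into the displayed equality and using $J'_\lambda(u)\cdot u^+=J'_\lambda(u^+)\cdot u^+-\tfrac12 K_V(u)$ gives
\[
s_u^2\log s_u^2\,\|u^+\|_2^2\le s_u^2\Bigl(J'_\lambda(u^+)\cdot u^+-\tfrac12 K_V(u)\Bigr)=s_u^2\,J'_\lambda(u)\cdot u^+\le 0.
\]
Since $\|u^+\|_2^2>0$, this forces $\log s_u^2\le 0$, hence $s_u\le 1$, and consequently $t_u\le s_u\le 1$.

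For the equality statement, I would trace the chain backwards. If $s_u=1$ (so the outer inequality is an equality), then $\log s_u^2=0$ and the middle inequality must also be an equality, which requires $s_ut_u=s_u^2$, i.e.\ $t_u=s_u=1$; alternatively, one may use $J'_\lambda(u^+)\cdot u^+=\tfrac{t_u}{2}K_V(u)$ together with $J'_\lambda(u)\cdot u^+\le 0$ to get $\tfrac{K_V(u)}{2}(t_u-1)\le 0$, which (using $K_V(u)<0$) forces $t_u\ge 1$, combining with $t_u\le s_u=1$ to yield $t_u=1$. The converse (if $s_u=t_u=1$, then equalities hold) is immediate. The case $t_u\ge s_u$ is treated identically with $u^-$ in place of $u^+$, using the $g_2$-identity.

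I do not anticipate a serious obstacle here: the argument is a routine chase through the Nehari identities, and the only subtlety is making sure the sign of $K_V(u)$ and the comparison $s_ut_u\le s_u^2$ are used in the correct direction. The equality analysis is the most delicate step, but it reduces to tracking where inequalities are tight in a three-term relation.
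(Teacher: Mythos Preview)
Your proposal is correct and follows essentially the same approach as the paper: assume WLOG that $s_u\ge t_u$, use the $g_1$-identity together with $K_V(u)<0$ and $s_ut_u\le s_u^2$ to bound $s_u^2\log s_u^2\|u^+\|_2^2$ above by $s_u^2 J'_\lambda(u)\cdot u^+\le 0$, forcing $s_u\le 1$. In fact, you go further than the paper, which does not spell out the equality analysis at all; your tracing of when the inequalities are tight (either via $s_ut_u=s_u^2$ or via the alternative computation $J'_\lambda(u)\cdot u^+=\tfrac{K_V(u)}{2}(t_u-1)$) correctly fills that gap.
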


\begin{proof}\quad
Without loss of generality, we assume that $0<t_u\leq s_u$. Since $s_uu^++t_uu^-\in\mathcal{M}_\lambda$, we have
\begin{eqnarray}\label{eqJ'suu+'}
0=J'_\lambda(s_uu^++t_uu^-)\cdot(s_uu^+)=s_u^2J'_\lambda(u^+)\cdot u^+-s_u^2\log s_u^2\|u^+\|_2^2-\frac{s_ut_u}{2}K_V(u).
\end{eqnarray}
Note that $K_V(x,y)<0$, using $J'_\lambda(u)\cdot u^+\leq0$ and \eqref{eqJ'suu+'}, we can deduce that
\[\begin{aligned}
0\leq&s_u^2\left(J'_\lambda(u^+)\cdot u^+-\frac{1}{2}K_V(x,y)\right)-s_u^2\log s_u^2\|u^+\|_2^2\\
=&s_u^2J'_\lambda(u)\cdot u^+-s_u^2\log s_u^2\|u^+\|_2^2\\
\leq&-s_u^2\log s_u^2\|u^+\|_2^2,
\end{aligned}\]
which implies that $0<s_u\leq1$. Therefore, $0<t_u\leq s_u\leq1$.
\end{proof}
Similarly, we have
\begin{lemma}\label{coremptyset}\quad
If $u\in H_0^1({\Omega})\setminus\{0\}$ with $u^{\pm}\neq0$, then there exists a unique positive number pair $(s_u,t_u)$ satisfying $s_uu^++t_uu^-\in \mathcal{M}_\Omega$.
\end{lemma}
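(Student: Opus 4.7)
The plan is to mirror the proof of Lemma \ref{lememptyset}, replacing the $\mathcal{H}_\lambda$-norm and the nonlocal coupling $K_V(u)$ by their Dirichlet analogues $\|\cdot\|_{H_0^1(\Omega)}$ and $K_\Omega(u)$, and exploiting the fact that $J_\Omega \in C^1(H_0^1(\Omega),\mathbb{R})$. Existence will come from a two-dimensional degree/Miranda argument applied to a natural map on a square $[r,R]^2$, and uniqueness from a comparison argument using the monotonicity properties of the logarithm.

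For existence, I would set
\[
g_1(s,t) := J'_\Omega(su^+ + tu^-)\cdot(su^+), \qquad g_2(s,t) := J'_\Omega(su^+ + tu^-)\cdot(tu^-).
\]
Using the $\Omega$-versions of identities \eqref{eqGammau+}--\eqref{eqGammau-} with $K_\Omega(u)$ in place of $K_V(u)$, I would expand
\[
g_1(s,t) = s^2\|u^+\|_{H_0^1(\Omega)}^2 - s^2\!\int_\Omega |u^+|^2 \log|u^+|^2\,d\mu - s^2\log s^2\,\|u^+\|_2^2 - s^2\|u^+\|_2^2 - \tfrac{st}{2}K_\Omega(u),
\]
and symmetrically for $g_2$. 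Since $K_\Omega(u) < 0$, the term $-s^2\log s^2\,\|u^+\|_2^2$ dominates as $s\to 0^+$ (giving $g_1>0$) and as $s\to +\infty$ (giving $g_1<0$), uniformly for $t$ in a fixed compact interval. Choosing $0<r<R$ appropriately so that $g_1(r,t)>0 > g_1(R,t)$ for $t\in[r,R]$ and similarly for $g_2$, Miranda's theorem produces $(s_u,t_u)\in[r,R]^2$ with $g_1(s_u,t_u)=g_2(s_u,t_u)=0$, so $s_uu^++t_uu^-\in \mathcal{M}_\Omega$.

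For uniqueness, I would argue exactly as in Lemma \ref{lememptyset}. First, if $u \in \mathcal{M}_\Omega$, assume another pair $(s_u,t_u)$ with $s_uu^++t_uu^-\in\mathcal{M}_\Omega$ exists and, WLOG, $0 < s_u \le t_u$. Using $J'_\Omega(u)\cdot u^\pm = 0$ together with
\[
0 = J'_\Omega(s_uu^++t_uu^-)\cdot(s_uu^+) = s_u^2 J'_\Omega(u^+)\cdot u^+ - s_u^2\log s_u^2\,\|u^+\|_2^2 - \tfrac{s_ut_u}{2}K_\Omega(u)
\]
(and its $u^-$ analogue), combined with $K_\Omega(u)<0$ and $s_u\le t_u$, I would derive $s_u^2\log s_u^2\,\|u^+\|_2^2 \ge 0$ and $t_u^2\log t_u^2\,\|u^-\|_2^2 \le 0$, forcing $s_u \ge 1 \ge t_u$ and hence $s_u=t_u=1$. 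For the general case $u\notin \mathcal{M}_\Omega$, given two candidate pairs $(s_i,t_i)$ yielding $v_i\in\mathcal{M}_\Omega$, apply the previous case to $v_1\in\mathcal{M}_\Omega$ with the ratios $(s_2/s_1, t_2/t_1)$ to conclude $(s_1,t_1)=(s_2,t_2)$.

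The main obstacle I anticipate is purely bookkeeping: tracking the sign of the asymmetric term $\tfrac{s_u t_u}{2}K_\Omega(u)$ in the inequalities so that the estimates above really force $s_u \le 1 \le t_u$ (or vice versa). The log-nonlinearity itself is actually helpful here because $\tau^2\log\tau^2$ is strictly convex in $\tau^2$ with a sign change at $\tau = 1$, which is exactly what pins down the exponents. Since $H_0^1(\Omega)$ embeds compactly into $L^p(\Omega)$ for all $p$ (the domain being bounded in the graph sense), all the integrals are finite and no delicate integrability issues arise, unlike in the $\mathcal{H}_\lambda$ case; this makes the Dirichlet version strictly easier than Lemma \ref{lememptyset} and the proof essentially reduces to verbatim adaptation.
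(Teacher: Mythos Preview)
Your proposal is correct and follows exactly the approach the paper intends: the paper does not even supply a separate proof of Lemma~\ref{coremptyset}, but simply prefaces it with ``Similarly, we have'', meaning the argument of Lemma~\ref{lememptyset} is to be transcribed verbatim with $\|\cdot\|_{\mathcal{H}_\lambda}$, $K_V(u)$, and $J_\lambda$ replaced by $\|\cdot\|_{H_0^1(\Omega)}$, $K_\Omega(u)$, and $J_\Omega$. Your sketch of the Miranda existence step and the two-stage uniqueness argument (first for $u\in\mathcal{M}_\Omega$, then reducing the general case via ratios) matches the paper's proof of Lemma~\ref{lememptyset} line by line, and your remark that the $C^1$ regularity and compact embeddings on the bounded domain make this version easier is accurate.
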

\begin{lemma}\label{corstleq1}\quad
Let $u\in H_0^1(\Omega)$ with $u^\pm\neq0$ such that $J'_\Omega(u)\cdot u^\pm\leq0$. Then the unique pair $(s_u,t_u)$ obtained in Lemma \ref{coremptyset} satisfies $s_u,t_u\in(0,1]$. In particular, the "=" holds if and only if $s_u=t_u=1$.
\end{lemma}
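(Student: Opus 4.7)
The plan is to mirror the proof of Lemma \ref{lemstleq1} essentially verbatim, substituting $J_\Omega$ for $J_\lambda$ and $K_\Omega(u)$ for $K_V(u)$ throughout. All the structural ingredients that drove that argument have been recorded for the Dirichlet setting earlier in Section \ref{preliminary}: the splitting identities
\[
J_\Omega(u)=J_\Omega(u^+)+J_\Omega(u^-)-\tfrac12 K_\Omega(u),\qquad J'_\Omega(u)\cdot u^\pm = J'_\Omega(u^\pm)\cdot u^\pm - \tfrac12 K_\Omega(u),
\]
and the strict sign $K_\Omega(u)<0$, which follows because each summand $\omega_{xy}[u^+(x)u^-(y)+u^-(x)u^+(y)]$ is nonpositive and at least one is strictly negative once $u^\pm\not\equiv 0$ (using connectedness of $\Omega$ exactly as in the graph-wide case).

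For the main computation I would fix the unique pair $(s_u,t_u)$ delivered by Lemma \ref{coremptyset} and, by symmetry, assume $0<t_u\le s_u$. The analog of \eqref{eqJ'suu+'} in the Dirichlet setting, obtained by expanding the gradient form as in Lemma \ref{lememptyset}, reads
\[
0 = J'_\Omega(s_uu^++t_uu^-)\cdot(s_uu^+) = s_u^2\,J'_\Omega(u^+)\cdot u^+ - s_u^2(\log s_u^2)\,\|u^+\|_2^2 - \tfrac{s_ut_u}{2}K_\Omega(u).
\]
Because $K_\Omega(u)<0$ and $s_ut_u\le s_u^2$, one has $-\tfrac{s_ut_u}{2}K_\Omega(u)\le -\tfrac{s_u^2}{2}K_\Omega(u)$, whence
\[
0\le s_u^2\bigl[J'_\Omega(u^+)\cdot u^+-\tfrac12 K_\Omega(u)\bigr] - s_u^2(\log s_u^2)\,\|u^+\|_2^2 = s_u^2\,J'_\Omega(u)\cdot u^+ - s_u^2(\log s_u^2)\,\|u^+\|_2^2.
\]
Applying the hypothesis $J'_\Omega(u)\cdot u^+\le 0$ forces $-s_u^2(\log s_u^2)\|u^+\|_2^2\ge 0$, hence $s_u\le 1$, so $0<t_u\le s_u\le 1$.

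For the equality assertion, I would trace back when the chain above is saturated. If $s_u=1$, then $J'_\Omega(u)\cdot u^+=0$ and the inequality $s_ut_u\le s_u^2$ must also be an equality, giving $t_u=s_u=1$; the symmetric identity tested against $t_uu^-$ handles the case $t_u=1$. Conversely, $s_u=t_u=1$ places $u$ itself in $\mathcal{M}_\Omega$, in which case $J'_\Omega(u)\cdot u^\pm=0$ is tautological. I do not anticipate any serious obstacle here, since every algebraic step transfers verbatim from the locally finite graph proof; the only points deserving a brief verification are the strict negativity $K_\Omega(u)<0$ and the fact that the boundary set $\partial\Omega$ enters only by restricting the ranges of summation in the gradient identities \eqref{eqGammau+u-}--\eqref{eqGammau-} and their Dirichlet counterparts, leaving the algebra untouched.
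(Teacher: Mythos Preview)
Your proposal is correct and matches the paper's approach exactly: the paper states this lemma immediately after Lemma~\ref{coremptyset} with only the word ``Similarly,'' indicating that the proof of Lemma~\ref{lemstleq1} carries over verbatim to the Dirichlet setting, which is precisely what you do. Your additional remarks on the equality case and on why $K_\Omega(u)<0$ requires the connectedness of $\Omega$ are accurate elaborations that the paper leaves implicit.
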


Now we prove that the minimizer of $J_{\lambda}$ on $\mathcal{M}_{\lambda}$ is achieved.
\begin{lemma}\label{lemm-lambda>0}\quad
Supposed $(A_1)$ and $(A_2)$ hold. Then $m_\lambda>0$ is achieved.
\end{lemma}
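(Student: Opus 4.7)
By Lemma \ref{lemJJst}, $J_\lambda(u) = \tfrac{1}{2}(\|u^+\|_2^2 + \|u^-\|_2^2)$ for every $u \in \mathcal{M}_\lambda$, so it suffices to bound $\|u^\pm\|_2$ away from zero. Using $J'_\lambda(u)\cdot u^+ = 0$ and $K_V(u)\le 0$ I rearrange to
\[
\int_V (u^+)^2 \log(u^+)^2 \, d\mu = \|u^+\|_{\mathcal{H}_\lambda}^2 - \|u^+\|_2^2 + \tfrac{1}{2}|K_V(u)| \ge 0.
\]
Since $t^2\log t^2 \le 0$ on $[0,1]$, if $\|u^+\|_\infty \le 1$ the integral would be nonpositive with equality only if $u^+$ is pointwise in $\{0,1\}$; combined with the connectedness of $V$, the constraint $\int\Gamma(u^+)d\mu=0$ and $u^\pm \ne 0$, this would force $u^+ \equiv 1$ and hence $u^- \equiv 0$, a contradiction. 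Thus $\|u^+\|_\infty > 1$, so some vertex $x_0$ has $u^+(x_0) > 1$ and $\|u^+\|_2^2 \ge \mu(x_0)(u^+(x_0))^2 > \mu_{\min}$; an analogous bound holds for $u^-$, giving $m_\lambda \ge \mu_{\min} > 0$.

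\textbf{Bounded minimizing sequence and weak limit.} Take $\{u_n\} \subset \mathcal{M}_\lambda$ with $J_\lambda(u_n) \to m_\lambda$. The identity above bounds $\|u_n^\pm\|_2$; the Nehari relation combined with the bound $\int(u_n^\pm)^2\log(u_n^\pm)^2 \, d\mu \le \tfrac{2}{\epsilon}\|u_n^\pm\|_{2+\epsilon}^{2+\epsilon} \le C\|u_n^\pm\|_2^{2+\epsilon}$ (using the inequality $\|v\|_{2+\epsilon}^{2+\epsilon} \le \mu_{\min}^{-\epsilon/2}\|v\|_2^{2+\epsilon}$ from the proof of Lemma \ref{lemA2}) together with $K_V \le 0$ gives $\|u_n^\pm\|_{\mathcal{H}_\lambda}^2 \le \|u_n^\pm\|_2^2 + C\|u_n^\pm\|_2^{2+\epsilon}$; the Cauchy-Schwarz estimate $|K_V(u_n)| \le 2\|u_n^+\|_{\mathcal{H}_\lambda}\|u_n^-\|_{\mathcal{H}_\lambda}$ then yields boundedness of $\{u_n\}$ in $\mathcal{H}_\lambda$. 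By Lemma \ref{lemA2}, up to a subsequence $u_n \rightharpoonup u_\lambda$ in $\mathcal{H}_\lambda$, $u_n \to u_\lambda$ in $L^p(V)$ for every $p \in [2,\infty]$, and pointwise. The $L^\infty$-convergence transfers $\|u_n^\pm\|_\infty > 1$ to $\|u_\lambda^\pm\|_\infty \ge 1$, so $u_\lambda^\pm \not\equiv 0$.

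\textbf{Passage to the limit and projection onto $\mathcal{M}_\lambda$.} To obtain $J'_\lambda(u_\lambda)\cdot u_\lambda^\pm \le 0$, I pass $J'_\lambda(u_n)\cdot u_n^\pm = 0$ to the limit using three ingredients: weak lower semicontinuity of $\|\cdot\|_{\mathcal{H}_\lambda}^2$ applied to $u_\lambda^\pm$; convergence $\int(u_n^\pm)^2\log(u_n^\pm)^2 \, d\mu \to \int(u_\lambda^\pm)^2\log(u_\lambda^\pm)^2 \, d\mu$ via the sandwich $|s^2\log s^2| \le C_\delta(|s|^{2-\delta}+|s|^{2+\delta})$ and a decomposition $V = \{u_\lambda^\pm \ge \eta\} \cup \{u_\lambda^\pm < \eta\}$ (the first set is finite because $\mu\ge\mu_{\min}$ and $u_\lambda\in L^2$, so uniform convergence applies; on the second, $L^\infty$-smallness of $u_n^\pm$ controls the tails); and Fatou's inequality $-K_V(u_\lambda) \le \liminf(-K_V(u_n))$, automatic because $-K_V(u) = 2\sum_x\sum_{y\sim x}\omega_{xy}u^+(x)|u^-(y)|$ is a sum of nonnegative terms. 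A similar Fatou-type bound (using $J'_\lambda(u_n)\cdot u_n = 0$ to control $\int u_n^2\log u_n^2\,d\mu$ and the Sobolev bound on $\int_{|u_n|\ge 1}u_n^2\log u_n^2\,d\mu$) shows that $u_\lambda \in \mathcal{D}_\lambda$. By Lemma \ref{lemstleq1} there exist $s,t \in (0,1]$ with $v := su_\lambda^+ + tu_\lambda^- \in \mathcal{M}_\lambda$, and
\[
m_\lambda \le J_\lambda(v) = \tfrac{s^2}{2}\|u_\lambda^+\|_2^2 + \tfrac{t^2}{2}\|u_\lambda^-\|_2^2 \le \tfrac{1}{2}\bigl(\|u_\lambda^+\|_2^2 + \|u_\lambda^-\|_2^2\bigr) = \lim_n \tfrac{1}{2}\bigl(\|u_n^+\|_2^2 + \|u_n^-\|_2^2\bigr) = m_\lambda,
\]
forcing $s = t = 1$, so $u_\lambda \in \mathcal{M}_\lambda$ with $J_\lambda(u_\lambda) = m_\lambda$.

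\textbf{Main obstacle.} The delicate step is the convergence of the logarithmic integral. The graph logarithmic Sobolev inequality is unavailable under $\mu\ge\mu_{\min}$, which precludes finite $\mu$ as highlighted in the introduction, so the Cazenave/Squassina--Szulkin machinery does not apply directly. What saves the argument is the much stronger compactness supplied by Lemma \ref{lemA2}, especially $\mathcal{H}_\lambda \hookrightarrow L^\infty(V)$ for $\lambda$ large: uniform convergence on the finite set where $u_\lambda^\pm$ is appreciable, combined with $L^\infty$-smallness of $u_n^\pm$ on its complement, replaces the role played in the Euclidean setting by log-Sobolev. The nonlocal cross term $K_V$ is handled by Fatou's lemma alone, thereby avoiding any need for strong $\mathcal{H}_\lambda$-convergence of the positive and negative parts separately.
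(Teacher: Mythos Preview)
Your overall architecture matches the paper's: take a minimizing sequence in $\mathcal{M}_\lambda$, get boundedness in $\mathcal{H}_\lambda$, use Lemma~\ref{lemA2} for compactness, verify $J'_\lambda(u_\lambda)\cdot u_\lambda^\pm\le 0$, then project via Lemma~\ref{lemstleq1}. Two points deserve comment.

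\textbf{Positivity.} Your argument for $m_\lambda>0$ is genuinely different from the paper's and more explicit. The paper proves positivity \emph{after} obtaining the minimizer: assuming $m_\lambda=0$ it shows $\|u_\lambda^\pm\|_{\mathcal{H}_\lambda}=0$ and contradicts this with a Sobolev-type lower bound $\|u_\lambda^\pm\|_{\mathcal{H}_\lambda}^2 < C\|u_\lambda^\pm\|_{\mathcal{H}_\lambda}^q$. Your route --- forcing $\|u^\pm\|_\infty>1$ via the sign of $s^2\log s^2$ and then reading off $\|u^\pm\|_2^2\ge\mu_{\min}$ --- is cleaner, yields the quantitative bound $m_\lambda\ge\mu_{\min}$ uniform in $\lambda$, and incidentally gives the nonvanishing of $u_\lambda^\pm$ for free via $L^2$-convergence.

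\textbf{A gap in the logarithmic step.} Your claim of \emph{full} convergence $\int(u_n^\pm)^2\log(u_n^\pm)^2\,d\mu\to\int(u_\lambda^\pm)^2\log(u_\lambda^\pm)^2\,d\mu$ is stronger than needed and the tail argument you sketch does not close. On $\{u_\lambda^\pm<\eta\}$ you only know $|u_n^\pm|<2\eta$ by $L^\infty$-convergence, and the bound $|s^2\log s^2|\le C_\delta|s|^{2-\delta}$ leaves you with $\int|u_n^\pm|^{2-\delta}$ over a possibly infinite vertex set; $L^2\cap L^\infty$ control does not dominate $L^{2-\delta}$ when the measure is infinite. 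Concretely, sequences with $\|u_n\|_2,\|u_n\|_\infty\to 0$ but $\int u_n^2\log u_n^2\not\to 0$ exist (take $u_n=a_n$ on $N_n$ vertices with $N_na_n^2\sim 1/|\log a_n|$). The Nehari constraint rules out this exact example, but your tail estimate does not exploit that constraint. What you actually need is only the one-sided inequality $\limsup_n\int(u_n^\pm)^2\log(u_n^\pm)^2\,d\mu\le\int(u_\lambda^\pm)^2\log(u_\lambda^\pm)^2\,d\mu$, and this follows cleanly --- as in the paper --- by splitting into positive and negative parts: the positive part converges (it is dominated by $C|u_n^\pm|^{2+\varepsilon}$, and $u_n^\pm\to u_\lambda^\pm$ in $L^{2+\varepsilon}$), while Fatou on the nonnegative quantity $-(s^2\log s^2)^-$ gives $\int((u_\lambda^\pm)^2\log(u_\lambda^\pm)^2)^-\le\liminf_n\int((u_n^\pm)^2\log(u_n^\pm)^2)^-$. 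Combining these yields exactly $J'_\lambda(u_\lambda)\cdot u_\lambda^\pm\le 0$. Replace your convergence claim with this one-sided estimate and the proof is complete.
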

\begin{proof}\quad
Taking a minimizing sequence $\{u_k\}\subset\mathcal{M}_\lambda$ of $J_\lambda$ yields
\begin{equation}\label{eqm-lambda}
\begin{aligned}
\lim_{k\to+\infty}J_\lambda(u_k)=&\lim_{k\to+\infty}\left[J_\lambda(u_k)-\frac{1}{2}J'_\lambda(u_k)\cdot u_k^+-\frac{1}{2}J'_\lambda(u_k)\cdot u_k^-\right]\\
=&\lim_{k\to+\infty}\left[J_\lambda(u_k^+)-\frac{1}{2}J'_\lambda(u_k^+)\cdot u_k^++J_\lambda(u_k^-)-J'_\lambda(u_k^-)\cdot u_k^-\right]\\
=&\lim_{k\to+\infty}\left(\frac{1}{2}\|u_k^+\|_2^2+\frac{1}{2}\|u_k^-\|_2^2\right)
=m_\lambda.
\end{aligned}
\end{equation}
By Lemma \ref{lemA2}, the H\"{o}lder's inequality and Young inequality, for any $\varepsilon\in(0,1)$, there exist $C_\varepsilon, C_\varepsilon', C_\varepsilon''>0$ such that
\[\begin{aligned}
\int_V|u_k^\pm|^2\log \vert u_k^\pm\vert^2d\mu\leq&\int_V(\vert u_k^\pm\vert^2\log \vert u_k^\pm\vert^2)^+d\mu\leq C_\varepsilon\int_V|u_k^\pm|^{2+\varepsilon}d\mu\\
\leq&C_\varepsilon\left(\int_V\vert u_k^\pm\vert^2d\mu\right)^{\frac{1}{2}}\left(\int_V\vert u_k^\pm\vert^{2(1+\varepsilon)}d\mu\right)^{\frac{1}{2}}\\
\leq&C_\varepsilon'\|u_k^\pm\|_2\|u_k^\pm\|_{\mathcal{H}_\lambda}^{1+\varepsilon}\\
\leq&\frac{1}{2}\|u_k^\pm\|_{\mathcal{H}_\lambda}^2+C_\varepsilon''\|u_k^\pm\|_2^{\frac{2}{1-\varepsilon}}.
\end{aligned}\]
Since $\{u_k\}\subset\mathcal{M}_\lambda$, we deduce that
\begin{equation}\label{eq2}
\begin{aligned}
\|u_k^\pm\|_{\mathcal{H}_\lambda}^2-\frac{1}{2}K_V^k(x,y)
\leq\frac{1}{2}\|u_k^\pm\|_{\mathcal{H}_\lambda}^2+C_\varepsilon''\|u_k^\pm\|_2^{\frac{2}{1-\varepsilon}}+\|u_k^\pm\|_2^2,
\end{aligned}
\end{equation}
where $K_V^k(u)=\sum\limits_{x\in V}\sum\limits_{y\sim x}\left[u_k^+(x)u_k^-(y)+u_k^-(x)u_k^+(y)\right]$.
This together with (\ref{eqm-lambda}) implies that $\{u_k^\pm\}$ is bounded in $\mathcal{H}_\lambda$ and $\{u_k\}$ is also bounded in $\mathcal{H}_\lambda$. Then, there exists $\lambda_0>0$ such that $\lambda\geq\lambda_0$, by Lemma \ref{lemA2}, there exists $u_{\lambda}\in\mathcal{H}_\lambda$ such that
\[\begin{cases}
u_k\rightharpoonup u_{\lambda} ~ \mbox{ weakly in }\mathcal{H}_\lambda,\\
u_k\to u_{\lambda} ~ \mbox{ point-wisely in } V,\\
u_k\to u_{\lambda} ~ \mbox{ strongly in } L^p(V) \mbox{ for } p\in[2,+\infty].
\end{cases}\]
Thus, together with the weak-lower semi-continuity of norm and Fatou's lemma, we get
\begin{gather*}
\begin{aligned}
&\int_V\left(\Gamma(u_{\lambda}^+)+\left(\lambda a(x)+1\right)\vert u_{\lambda}^+\vert^2\right)d\mu-\int_V(\vert u_{\lambda}^+\vert^2\log\vert u_{\lambda}^+\vert^2)^-d\mu-\frac{1}{2}K_V^\lambda(u)\\
\leq&\liminf_{k\to+\infty}\left[\int_V\left(\Gamma(u_k^+)+\left(\lambda a(x)+1\right)\vert u_k^+\vert^2\right)d\mu-\int_V(\vert u_k^+\vert^2\log\vert u_k^+\vert^2)^-d\mu-\frac{1}{2}K_V^k(u)\right]\\
=&\liminf_{k\to+\infty}\int_V\left(\vert u_k^+\vert^2+(\vert u_k^+\vert^2\log\vert u_k^+\vert^2)^+\right)d\mu\\
=&\int_V\vert u_{\lambda}^+\vert^2d\mu+\int_V(\vert u_{\lambda}^+\vert^2\log\vert u_{\lambda}^+\vert^2)^+d\mu,
\end{aligned}
\end{gather*}
where $K_V^\lambda(u)=\sum\limits_{x\in V}\sum\limits_{y\sim x}\left[u_\lambda^+(x)u_\lambda^-(y)+u_\lambda^-(x)u_\lambda^+(y)\right]$.
It follows that
\begin{equation*}\label{equ0+}
J'_\lambda(u_\lambda)\cdot u_\lambda^+=\int_V\left(\Gamma(u_{\lambda}^+)+\lambda a(x)\vert u_{\lambda}^+\vert^2\right)d\mu-\int_V\vert u_{\lambda}^+\vert^2\log\vert u_{\lambda}^+\vert^2d\mu-\frac{1}{2}K_V^\lambda(u)\leq0.
\end{equation*}
Similarly, it holds that
\begin{equation*}\label{equ0-}
J'_\lambda(u_\lambda)\cdot u_\lambda^-=\int_V\left(\Gamma(u_{\lambda}^-)+\lambda a(x)\vert u_{\lambda}^-\vert^2\right)d\mu-\int_V\vert u_{\lambda}^-\vert^2\log\vert u_{\lambda}^-\vert^2d\mu-\frac{1}{2}K_V^\lambda(u)\leq0.
\end{equation*}
In view of Lemma \ref{lememptyset} and Lemma \ref{lemstleq1}, there exist $s, t\in(0,1]$ such that $\widetilde{u}=su_{\lambda}^++tu_{\lambda}^-\in\mathcal{M}_\lambda$. Then
\begin{gather*}
\begin{aligned}
m_\lambda\leq&J_\lambda(\widetilde{u})=J_\lambda(\widetilde{u})-\frac{1}{2}J'_\lambda(\widetilde{u})\cdot(su_{\lambda}^+)-\frac{1}{2}J'_\lambda(\widetilde{u})\cdot(tu_{\lambda}^-)\\
=&\frac{s^2}{2}\|u_{\lambda}^+\|_2^2+\frac{t^2}{2}\|u_{\lambda}^-\|_2^2\\
\leq&\liminf_{k\to+\infty}\left[\frac{1}{2}\|u_k^+\|_2^2+\frac{1}{2}\|u_k^-\|_2^2\right]\\
=&\liminf_{k\to+\infty}\left[J_\lambda(u_k)-\frac{1}{2}J'_\lambda(u_k)\cdot u_k^+-\frac{1}{2}J'_\lambda(u_k)\cdot u_k^-\right]\\
=&\liminf_{k\to+\infty}J_\lambda(u_k)=m_\lambda.
\end{aligned}
\end{gather*}
This implies that $s=t=1$, i.e., $u_\lambda\in\mathcal{M}_\lambda$ satisfying $J_\lambda(u_{\lambda})=m_\lambda$.

We claim that $m_\lambda>0$. In fact, if $m_\lambda=0$, we have
\[
0=J_\lambda(u_{\lambda})-\frac{1}{2}J'(u_{\lambda})\cdot u_{\lambda}^+-\frac{1}{2}J'(u_{\lambda})\cdot u_{\lambda}^-=\frac{1}{2}\|u_{\lambda}^+\|_2^2+\frac{1}{2}\|u_\lambda^-\|_2^2.
\]
Then, by similar arguments as in (\ref{eq2}), it follows that $\|u_{\lambda}^\pm\|_{\mathcal{H}_\lambda}=0$. However, by Lemma \ref{lemA2}, for any $q>2$, there exists $C_q>0$ such that
\[
\|u_{\lambda}^\pm\|_{\mathcal{H}_\lambda}^2<\int_V\vert u_{\lambda}^\pm\vert^2\log \vert u_{\lambda}^\pm\vert^2d\mu\leq\int_V(\vert u_{\lambda}^\pm\vert^2\log \vert u_{\lambda}^\pm\vert^2)^+d\mu\leq C_q\int_V\vert u_{\lambda}^\pm\vert^qd\mu\leq C\|u_{\lambda}^\pm\|_{\mathcal{H}_\lambda}^q,
\]
which provides a contradiction. Hence the claim holds.
\end{proof}

The following lemma completes the proof of Theorem \ref{th1.1}.
\begin{lemma}\label{lemsign-solu}\quad
If $u\in\mathcal{M}_\lambda$ with $J_\lambda(u)=m_\lambda$, then $u$ is a sign-changing solution of equation (\ref{eq-eq1}). Moreover, $m_\lambda>2c_\lambda$.
\end{lemma}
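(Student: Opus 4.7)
The plan is to establish the two assertions in the order they are stated: first that a minimizer $u$ of $J_\lambda$ on $\mathcal{M}_\lambda$ is automatically a (sign-changing) weak solution of (\ref{eq-eq1}), and then the strict energy gap $m_\lambda>2c_\lambda$. The sign-changing property is free because $u\in\mathcal{M}_\lambda$ forces $u^\pm\neq0$, so the first claim reduces to showing that $u$ is a critical point of $J_\lambda$ in the sense of Definition~\ref{def1}.

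For the critical-point part I would argue by contradiction using the directional-derivative, Miranda-type scheme of \cite{CNW-2019,WZ-2016}. If $u$ is not a critical point, the density of $C_c(V)$ in $\mathcal{D}_\lambda$ provides $\varphi\in C_c(V)$ with $J_\lambda'(u)\cdot\varphi\leq -2\alpha$ for some $\alpha>0$. Because $\varphi$ has finite support, $(s,t,\sigma)\mapsto J_\lambda'(su^++tu^-+\sigma\varphi)\cdot\varphi$ is continuous at $(1,1,0)$, so there exist $\delta\in(0,1/2)$ and $\sigma_0>0$ making this directional derivative $\leq -\alpha$ whenever $|s-1|,|t-1|\leq\delta$ and $\sigma\in[0,\sigma_0]$. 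With a continuous cut-off $\eta\colon[0,\infty)\to[0,1]$ equal to $1$ near $0$ and supported in $[0,\delta/2]$, I set
\[
w_{s,t,\sigma}:=su^++tu^-+\sigma\,\eta\bigl(|s-1|+|t-1|\bigr)\varphi,
\]
so that integrating the directional-derivative bound in $\sigma$ yields $J_\lambda(w_{s,t,\sigma})<m_\lambda$ on a small inner box around $(1,1)$ for any fixed $\sigma\in(0,\sigma_0]$. To contradict $J_\lambda(u)=m_\lambda$ it then suffices to find $(s,t)$ in that box with $w_{s,t,\sigma}\in\mathcal{M}_\lambda$, which I would obtain by applying Miranda's theorem to
\[
\Phi_\sigma(s,t)=\bigl(J_\lambda'(w_{s,t,\sigma})\cdot w_{s,t,\sigma}^+,\,J_\lambda'(w_{s,t,\sigma})\cdot w_{s,t,\sigma}^-\bigr)
\]
on $[1-\delta,1+\delta]^2$, the four edge sign conditions being inherited, up to an $O(\sigma)$ error, from those verified for $g_1,g_2$ in Lemma~\ref{lememptyset}.

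The energy gap is then short. Since $u\in\mathcal{M}_\lambda$, the nonlocal decomposition of Section~\ref{preliminary} gives
\[
J_\lambda'(u^+)\cdot u^+=J_\lambda'(u^-)\cdot u^-=\tfrac{1}{2}K_V(u)<0.
\]
Running the fibering argument of Lemma~\ref{lememptyset} on the single-sign pieces (formally, with $K_V$ suppressed) shows that $J_\lambda'(sv)\cdot(sv)=0$ has a unique positive solution $s=s(v)$ for every constant-sign $v\in\mathcal{D}_\lambda\setminus\{0\}$, and the strict negativity above forces $s_1:=s(u^+)<1$ and $t_1:=s(u^-)<1$. Hence $s_1u^+,t_1u^-\in\mathcal{N}_\lambda$, and combining Lemma~\ref{lemJJst} with the sum decomposition
\[
J_\lambda(s_1u^++t_1u^-)=J_\lambda(s_1u^+)+J_\lambda(t_1u^-)-\tfrac{s_1t_1}{2}K_V(u),
\]
the strict positivity of $-\tfrac{s_1t_1}{2}K_V(u)$, and $J_\lambda(s_1u^+),J_\lambda(t_1u^-)\geq c_\lambda$, yields $m_\lambda=J_\lambda(u)\geq J_\lambda(s_1u^++t_1u^-)>J_\lambda(s_1u^+)+J_\lambda(t_1u^-)\geq 2c_\lambda$.

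The main obstacle is the Miranda step of the critical-point proof: because $\Delta$ is nonlocal, the cross term $K_V$ couples the $\pm$ components of $w_{s,t,\sigma}$, so the signed Nehari identities defining $\Phi_\sigma$ do not decouple into two independent scalar equations as in the local setting. I expect the bulk of the technical work to lie in choosing $\sigma_0$ small enough that both the strict sign conditions of $\Phi_\sigma$ on the four edges of the box and the non-triviality of $w_{s,t,\sigma}^\pm$ throughout the box survive the $\sigma$-perturbation. Once these estimates are in place, the Miranda zero, and hence the contradiction, is immediate.
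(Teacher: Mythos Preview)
Your proposal is correct and follows essentially the same contradiction-plus-Miranda scheme as the paper for the critical-point part, and the same combination of Lemma~\ref{lemJJst}, the decomposition $J_\lambda(s_1u^++t_1u^-)=J_\lambda(s_1u^+)+J_\lambda(t_1u^-)-\tfrac{s_1t_1}{2}K_V(u)$, and $K_V(u)<0$ for the energy gap. The only cosmetic difference is the Miranda box: the paper uses the large rectangle $[\varepsilon,2-\varepsilon]^2$ whereas you use the small one $[1-\delta,1+\delta]^2$; in both setups the cut-off kills the perturbation on the boundary, so your anticipated ``$O(\sigma)$ error'' in the edge sign conditions never arises---those conditions are computed for the unperturbed map $su^++tu^-$ and are verified directly from $u\in\mathcal{M}_\lambda$ (not literally inherited from the $[r,R]$ box of Lemma~\ref{lememptyset}).
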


\begin{proof}\quad
We assume by contradiction that $u\in\mathcal{M}_\lambda$ with $J_\lambda(u)=m_\lambda$, but $u$ is not a solution of equation (\ref{eq-eq1}). Then we can find a function $\phi\in C_c(V)$ such that
\[
\int_V\left(\nabla u\nabla\phi+\lambda a(x)u\phi\right)d\mu-\int_Vu\phi\log u^2d\mu\leq-1,
\]
which implies that, for some $\varepsilon>0$ small enough,
\[
J_\lambda'(su^++tu^-+\sigma\phi)\cdot\phi\leq-\frac{1}{2}\ \mbox{for all }\vert s-1\vert+\vert t-1\vert+\vert\sigma\vert\leq\epsilon.
\]
In what follows, we estimate $\sup\limits_{s,t}J_\lambda\left(su^++tu^-+\varepsilon\eta(s,t)\phi\right)$, where $\eta$ is a cut-off function such that
\[\eta(s,t)=
\begin{cases}
1 ~~~ \mbox{if }\vert s-1\vert\leq\frac{1}{2}\varepsilon \mbox{ and } \vert t-1\vert\leq\frac{1}{2}\varepsilon,\\
0 ~~~ \mbox{if }\vert s-1\vert\geq\varepsilon \mbox{ or } \vert t-1\vert\geq\varepsilon.
\end{cases}\]
In the case of $\vert s-1\vert\leq\varepsilon$ and $\vert t-1\vert\leq\varepsilon$, we have
\[\begin{aligned}
J_\lambda\left(su^++tu^-+\varepsilon\eta(s,t)\phi\right)=&J_\lambda\left(su^++tu^-+\varepsilon\eta(s,t)\phi\right)-J_\lambda(su^++tu^-)+J_\lambda(su^++tu^-)\\
=&J_\lambda(su^++tu^-)+\int_0^1 J_\lambda'\left(su^++tu^-+\sigma\varepsilon\eta(s,t)\phi\right)\cdot\left(\varepsilon\eta(s,t)\phi\right) d\sigma\\
=&J_\lambda(su^++tu^-)+\varepsilon\eta(s,t)\int_0^1 J_\lambda'\left(su^++tu^-+\sigma\varepsilon\eta(s,t)\phi\right)\cdot\phi d\sigma\\
\leq&J_\lambda(su^++tu^-)-\frac{1}{2}\varepsilon\eta(s,t).
\end{aligned}\]
For the other case, that is $\vert s-1\vert\geq\varepsilon$ or $\vert t-1\vert\geq\varepsilon$, $\eta(s,t)=0$, the above estimate is obvious. Now since $u\in\mathcal{M}_\lambda$, for $(s,t)\neq(1,1)$, by Lemma \ref{lemJJst}, we have  $J_\lambda(su^++tu^-)<J_\lambda(u)$.
Hence
\[
J_\lambda\left(su^++tu^-+\varepsilon\eta(s,t)\phi\right)\leq J_\lambda(su^++tu^-)<J_\lambda(u) \ \mbox{for all } (s,t)\neq(1,1).
\]
For $(s,t)=(1,1)$,
\[
J_\lambda\left(su^++tu^-+\varepsilon\eta(s,t)\phi\right)\leq J_\lambda(su^++tu^-)-\frac{1}{2}\varepsilon\eta(1,1)=J_\lambda(u)-\frac{1}{2}\varepsilon.
\]
In any case, we have $J_\lambda\left(su^++tu^-+\varepsilon\eta(s,t)\phi\right)< J_\lambda(u)=m_\lambda$. In particular, for $0<\varepsilon<1-\varepsilon$,
\[
\sup\limits_{\varepsilon\leq s,t\leq2-\varepsilon}J_\lambda\left(su^++tu^-+\varepsilon\eta(s,t)\phi\right)=\widetilde{m}_\lambda<m_\lambda.
\]
Set $v=su^++tu^-+\varepsilon\eta(s,t)\phi$ and define
\[
H(s,t)=\left(F_1(s,t),F_2(s,t)\right)\dot{=}\left(J_\lambda'(v)\cdot v^+,J_\lambda'(v)\cdot v^-\right).
\]
By the definition of $\eta$, when $s=\varepsilon,\ t\in(\epsilon,2-\epsilon)$, we have $\eta(s,t)=0$ and $s<t$. Hence
\[\begin{aligned}
F_1(\varepsilon,t)\dot{=}&J'_\lambda(su^++tu^-)\cdot (su^+)\Big|_{s=\varepsilon}\\
=&\left[s^2J'_\lambda(u^+)\cdot u^+-\frac{st}{2}K_V(u)-s^2\log s^2\|u^+\|_2^2\right]_{s=\varepsilon}\\
>&\left[s^2\left(J'_\lambda(u^+)-\frac{1}{2}K_V(u)\right)-s^2\log s^2\|u^+\|_2^2\right]_{s=\varepsilon}\\
=&-\varepsilon^2\log\varepsilon^2\|u^+\|_2^2\\
>&0.
\end{aligned}\]
When $s=2-\varepsilon,\ t\in(\epsilon,2-\epsilon)$, we have $\eta(s,t)=0$ and $s>t$. Therefore,
\[\begin{aligned}
F_1(2-\varepsilon,t)\dot{=}&J'_\lambda(su^++tu^-)\cdot (su^+)\Big|_{s=2-\varepsilon}\\
=&\left[s^2J'_\lambda(u^+)\cdot u^+-\frac{st}{2}K_V(u)-s^2\log s^2\|u^+\|_2^2\right]_{s=2-\varepsilon}\\
<&\left[s^2\left(J'_\lambda(u^+)-\frac{1}{2}K_V(u)\right)-s^2\log s^2\|u^+\|_2^2\right]_{s=2-\varepsilon}\\
=&-(2-\varepsilon)^2\log(2-\varepsilon)^2\|u^+\|_2^2\\
<&0.
\end{aligned}\]
That is
\[\begin{aligned}
F_1(\varepsilon,t)>0,\ F_1(2-\varepsilon,t)<0 \ \mbox{for all }t\in(\varepsilon, 2-\varepsilon).
\end{aligned}\]
Similarly, we have
\[\begin{aligned}
F_2(s,\varepsilon)>0,\ F_2(s,2-\varepsilon)<0 \ \mbox{for all }s\in(\varepsilon, 2-\varepsilon).
\end{aligned}\]
Thus, applying Miranda's theorem \cite{M}, there exists $(s_0,t_0)\in(\varepsilon,2-\varepsilon)\times(\varepsilon,2-\varepsilon)$ such that $\widetilde{u}=s_0u^++t_0u^-+\varepsilon\eta(s_0,t_0)\phi\in\mathcal{M}_\lambda$ and $J_\lambda(\widetilde{u})<m_\lambda$. This give a contradiction to the definition of $m_\lambda$.

Next, we prove that $m_\lambda>2c_\lambda$. Assume that $u\in\mathcal{M}_\lambda$ such that $J_\lambda(u)=m_\lambda$. Then $u^\pm\neq0$. Similar to the proof of Lemma \ref{lememptyset} and Lemma \ref{lemstleq1}, we can deduce that there exists a unique $s_{u^+}\in(0,1]$ such that $s_{u^+}u^+\in\mathcal{N}_\lambda$, and a unique $t_{u^-}\in(0,1]$ such that $t_{u^-}u^-\in\mathcal{N}_\lambda$. Similar to the proofs of Lemma \ref{lemm-lambda>0} and Lemma \ref{lemsign-solu}, we can deduce that $c_\lambda>0$ can be achieved. Furthermore, if $u\in\mathcal{N}_\lambda$ with $J_\lambda(u)=c_\lambda$, then $u$ is a least energy solution.

By the definition of $J_\lambda$ and $K_V(x,y)<0$, we have
\[\begin{aligned}
J_\lambda(s_{u^+}u^++t_{u^-}u^-)=&J_\lambda(s_{u^+}u^+)+J_\lambda(t_{u^-}u^-)-\frac{s_{u^+}t_{u^-}}{2}K_V(u)\\
>&J_\lambda(s_{u^+}u^+)+J_\lambda(t_{u^-}u^-).
\end{aligned}\]
By Lemma \ref{lemJJst}, we deduce that
\[\begin{aligned}
m_\lambda=J_\lambda(u^++u^-)\geq J_\lambda(s_{u^+}u^++t_{u^-}u^-)>J_\lambda(s_{u^+}u^+)+J_\lambda(t_{u^-}u^-)\geq 2c_\lambda.
\end{aligned}\]
This completes the proof.
\end{proof}

\begin{proof}[Proof of Theorem \ref{th1.2}]\quad
The proof can be obtained by similar arguments in Theorem \ref{th1.1}.
\end{proof}

\section{Convergence of the least energy sign-changing solution $u_{\lambda}$}\label{Proof of Theorem 1.3}

In this section, we shall study the asymptotic behavior of $u_{\lambda}$ as $\lambda\to+\infty$. Firstly, we show that $\{u_{\lambda}\}$ is uniformly bounded above and below away from zero.

\begin{lemma}\label{lemunif-lowbdd}\quad
There exists $\sigma>0$ (independent of $\lambda$) such that $\|u\|_{\mathcal{H}_\lambda}\geq\|u\|_{H^1(V)}\geq\sigma$ for all $u\in\mathcal{M}_\lambda$.
\end{lemma}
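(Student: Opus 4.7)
The inequality $\|u\|_{\mathcal{H}_\lambda} \geq \|u\|_{H^1(V)}$ is immediate: the definitions give $\|u\|_{\mathcal{H}_\lambda}^2 - \|u\|_{H^1(V)}^2 = \int_V \lambda a(x) u^2\, d\mu \geq 0$, so only the $\lambda$-uniform positive lower bound $\|u\|_{H^1(V)} \geq \sigma$ requires work.

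The plan starts from the observation that $\mathcal{M}_\lambda \subset \mathcal{N}_\lambda$: for $u \in \mathcal{M}_\lambda$ one has $J'_\lambda(u) \cdot u = J'_\lambda(u) \cdot u^+ + J'_\lambda(u) \cdot u^- = 0$, which unpacks to
\begin{equation}\label{planneh}
\|u\|_{\mathcal{H}_\lambda}^2 - \|u\|_2^2 \;=\; \int_V u^2 \log u^2\, d\mu.
\end{equation}
I would then apply the sharp one-sided inequality
\[
t^2 \log t^2 \;\leq\; \tfrac{2}{\varepsilon}\bigl(t^{2+\varepsilon} - t^2\bigr), \qquad t \geq 0,\ \varepsilon \in (0,2),
\]
which reduces, via $s = t^\varepsilon$, to the classical $\log s \leq s-1$. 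Summing and inserting into (\ref{planneh}) gives
\[
\|u\|_{\mathcal{H}_\lambda}^2 + \bigl(\tfrac{2}{\varepsilon}-1\bigr)\|u\|_2^2 \;\leq\; \tfrac{2}{\varepsilon}\|u\|_{2+\varepsilon}^{2+\varepsilon};
\]
for $\varepsilon \in (0,2)$ the coefficient of $\|u\|_2^2$ is positive so that non-negative term is dropped, leaving $\|u\|_{\mathcal{H}_\lambda}^2 \leq (2/\varepsilon)\|u\|_{2+\varepsilon}^{2+\varepsilon}$.

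The concluding step uses only $\lambda$-independent embeddings. From Lemma \ref{lemA2}, $\|u\|_\infty \leq \sqrt{1/\mu_{\min}}\,\|u\|_{H^1(V)}$. Combined with the elementary interpolation $\|u\|_{2+\varepsilon}^{2+\varepsilon} \leq \|u\|_\infty^\varepsilon \|u\|_2^2$ and $\|u\|_2 \leq \|u\|_{\mathcal{H}_\lambda}$, this yields
\[
\|u\|_{\mathcal{H}_\lambda}^2 \;\leq\; \tfrac{2}{\varepsilon}\,\mu_{\min}^{-\varepsilon/2}\,\|u\|_{H^1(V)}^\varepsilon\,\|u\|_{\mathcal{H}_\lambda}^2.
\]
Since $u \in \mathcal{M}_\lambda$ forces $u \neq 0$ and hence $\|u\|_{\mathcal{H}_\lambda} > 0$, dividing produces $\|u\|_{H^1(V)} \geq (\varepsilon/2)^{1/\varepsilon}\sqrt{\mu_{\min}} =: \sigma$, a constant depending only on $\mu_{\min}$ and a fixed choice of $\varepsilon \in (0,2)$, manifestly independent of $\lambda$.

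The delicate point I anticipate is the removal of the $\|u\|_2^2$ term from (\ref{planneh}): controlling $\int_V u^2 \log u^2\, d\mu$ only through its positive part, say $(u^2\log u^2)^+ \leq C_\varepsilon |u|^{2+\varepsilon}$ as in the proof of Lemma \ref{lemm-lambda>0}, would leave an uncancellable $\|u\|_2^2$ on the right of (\ref{planneh}), and since $\|u\|_2 \leq \|u\|_{\mathcal{H}_\lambda}$ the estimate would collapse to the tautology $\|u\|_{\mathcal{H}_\lambda}^2 \leq C\|u\|_{\mathcal{H}_\lambda}^2$. The one-sided form with its compensating $-(2/\varepsilon)t^2$ correction is precisely what makes the $L^2$ contributions cancel, and it is this cancellation that converts the Nehari identity into a $\lambda$-free quantitative lower bound.
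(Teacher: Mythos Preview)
Your proof is correct and in fact a bit sleeker than the paper's. The paper argues with $u^+$ and $u^-$ separately: it uses $0 = J'_\lambda(u)\cdot u^+ = J'_\lambda(u^+)\cdot u^+ - \tfrac12 K_V(u)$ together with $K_V(u)<0$, then partitions the integral $\int |u^+|^2\log|u^+|^2\,d\mu$ into the three ranges $|u^+|<e^{-1/2}$, $e^{-1/2}\le |u^+|\le 1$, $|u^+|>1$ to absorb the troublesome $\|u^+\|_2^2$ by the elementary fact $s^2\le e^{\varepsilon/2}s^{2+\varepsilon}$ for $s\ge e^{-1/2}$. This yields $\|u^\pm\|_{H^1(V)}^2 \le C''_\varepsilon \|u^\pm\|_{H^1(V)}^{2+\varepsilon}$ and hence a uniform lower bound for each sign part.

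Your route instead passes through the ordinary Nehari identity via $\mathcal{M}_\lambda\subset\mathcal{N}_\lambda$ and replaces the region-splitting by the single pointwise inequality $t^2\log t^2 \le \tfrac{2}{\varepsilon}(t^{2+\varepsilon}-t^2)$, whose built-in $-\tfrac{2}{\varepsilon}t^2$ correction is exactly what makes the $L^2$ contributions cancel. Two remarks: (i) the paper's version delivers the slightly stronger information that $\|u^+\|_{H^1(V)}$ and $\|u^-\|_{H^1(V)}$ are each bounded below uniformly in $\lambda$, which can be convenient in convergence arguments where one must rule out loss of a sign component; your argument gives only the aggregate bound, though that is all the stated lemma asks for. (ii) Your constant $\sigma=(\varepsilon/2)^{1/\varepsilon}\sqrt{\mu_{\min}}$ is explicit, whereas the paper's $C''_\varepsilon$ is left implicit.
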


\begin{proof}\quad
Note that for all $\varepsilon>0$, if $s\geq e^{-\frac{1}{2}}$, then
\begin{equation}\label{eqs}
e^{\frac{\varepsilon}{2}}s^{2+\varepsilon}\geq s^2.
\end{equation}
Since $u\in\mathcal{M}_\lambda$, by Lemma \ref{lemA2} and (\ref{eqs}), we have
\[\begin{aligned}
0=&J'_{\lambda}(u)\cdot u^+=J'_\lambda(u^+)\cdot u^+-\frac{1}{2}K_V(u)\\
\geq&\int_V\left(\Gamma(u^+)+(\lambda a(x)+1)|u^+|^2\right)d\mu-\int_V|u^+|^2d\mu-\int_V|u^+|^2\log|u^+|^2d\mu\\
=&\|u^+\|_{\mathcal{H}_{\lambda}}^2-\int_{|u^+|<e^{-\frac{1}{2}}}\left(|u^+|^2+|u^+|^2\log |u^+|^2\right)d\mu-\int_{|u^+|\geq e^{-\frac{1}{2}}}|u^+|^2d\mu\\
&-\int_{e^{-\frac{1}{2}}\leq|u^+|\leq1}|u^+|^2\log |u^+|^2d\mu-\int_{|u^+|>1}|u^+|^2\log |u^+|^2d\mu\\
\geq&\|u^+\|_{\mathcal{H}_{\lambda}}^2-e^{\frac{\varepsilon}{2}}\int_{|u^+|\geq e^{-\frac{1}{2}}}|u^+|^{2+\varepsilon}d\mu-C_{\varepsilon}\int_{|u^+|>1}|u^+|^{2+\varepsilon}d\mu\\
\geq&\|u^+\|_{\mathcal{H}_{\lambda}}^2-C_{\varepsilon}'\int_V|u^+|^{2+\varepsilon}d\mu\\
\geq&\|u^+\|_{H^1(V)}^2-C_{\varepsilon}''\|u^+\|_{H^1(V)}^{2+\varepsilon}.
\end{aligned}\]
Then
\[
\|u^+\|_{\mathcal{H}_\lambda}\geq \|u^+\|_{H^1(V)}\geq(C''_{\varepsilon})^{-\frac{1}{\varepsilon}}>0.
\]
Similarly, we get
\[
\|u^-\|_{\mathcal{H}_\lambda}\geq \|u^-\|_{H^1(V)}\geq(C''_{\varepsilon})^{-\frac{1}{\varepsilon}}>0.
\]
Hence,
\[\begin{aligned}
\|u\|_{\mathcal{H}_\lambda}^2\geq\|u\|_{H^1(V)}^2=\|u^+\|_{H^1(V)}^2+\|u^-\|_{H^1(V)}^2-K_V(u)>\|u^+\|_{H^1(V)}^2+\|u^-\|_{H^1(V)}^2\geq2(C''_{\varepsilon})^{-\frac{2}{\varepsilon}}.
\end{aligned}\]
Thus we can choose $\sigma=\sqrt{2}(C''_{\varepsilon})^{-\frac{1}{\varepsilon}}$ such that $\|u\|_{\mathcal{H}_\lambda}\geq\|u\|_{H^1(V)}\geq\sigma$.
\end{proof}

\begin{lemma}\label{lemunif-uppbdd}\quad
There exists $c_0>0$ (independent of $\lambda$) such that if sequence $\{u_k\}\subset\mathcal{M}_\lambda$ of $J_\lambda$ with $\lim_{k\to\infty}J_\lambda(u_k)=m_\lambda$, then $\|u_k\|_{\mathcal{H}_\lambda}\leq c_0$.
\end{lemma}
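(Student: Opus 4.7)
\textbf{Proposal for Lemma \ref{lemunif-uppbdd}.} The key point is that the bound must be uniform in $\lambda$, not just finite for each $\lambda$. My plan has two main stages: first obtain a $\lambda$-independent upper bound for $m_\lambda$, and then propagate this to a $\lambda$-independent bound on $\|u_k\|_{\mathcal{H}_\lambda}$ via the Nehari relations.

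\emph{Step 1: Bound $m_\lambda$ above uniformly.} I would use Theorem \ref{th1.2} to obtain a least energy sign-changing solution $w\in\mathcal{M}_\Omega$ with $J_\Omega(w)=m_\Omega$, and then extend $w$ by zero to a function on $V$. Since $\Omega$ is a bounded domain in the locally finite graph $G$, the extension is compactly supported and hence belongs to $\mathcal{D}_\lambda$. Because $a(x)\equiv 0$ on $\Omega$ and $w\equiv 0$ on $V\setminus\Omega$, the $\lambda a(x)w^2$ contribution vanishes identically, and both the gradient integral and the logarithmic integral over $V$ reduce to the corresponding integrals appearing in the definition of $J_\Omega$. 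Checking $J_\lambda'(w)\cdot w^\pm$ along the same lines, one verifies that the extension lies in $\mathcal{M}_\lambda$ and that $J_\lambda(w)=J_\Omega(w)$. This gives $m_\lambda\leq J_\lambda(w)=m_\Omega$ for every $\lambda>0$, independent of $\lambda$.

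\emph{Step 2: From $m_\lambda\leq m_\Omega$ to a uniform bound on $\|u_k^\pm\|_{\mathcal{H}_\lambda}$.} Using $u_k\in\mathcal{M}_\lambda$ and the Nehari identity from Lemma \ref{lemm-lambda>0},
\[
J_\lambda(u_k)=\frac{1}{2}\|u_k^+\|_2^2+\frac{1}{2}\|u_k^-\|_2^2\longrightarrow m_\lambda\leq m_\Omega,
\]
so $\|u_k^\pm\|_2$ is bounded by a constant that does not depend on $\lambda$. Repeating the Young/H\"older estimate used in \eqref{eq2} (whose constants come solely from the $\lambda$-independent embedding $\mathcal{H}_\lambda\hookrightarrow L^p$ of Lemma \ref{lemA2}), together with $K_V(u_k)<0$, yields
\[
\|u_k^\pm\|_{\mathcal{H}_\lambda}^2\leq 2C_\varepsilon''\|u_k^\pm\|_2^{\frac{2}{1-\varepsilon}}+2\|u_k^\pm\|_2^2\leq C,
\]
uniformly in $\lambda$ and $k$.

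\emph{Step 3: Bound $-K_V(u_k)$ uniformly and conclude.} From $J_\lambda'(u_k)\cdot u_k^+=0$ and the identity $J_\lambda'(u_k)\cdot u_k^+=J_\lambda'(u_k^+)\cdot u_k^+-\tfrac{1}{2}K_V(u_k)$, one has
\[
-\tfrac{1}{2}K_V(u_k)=-\|u_k^+\|_{\mathcal{H}_\lambda}^2+\|u_k^+\|_2^2+\int_V|u_k^+|^2\log|u_k^+|^2\,d\mu.
\]
Dropping the negative term and controlling the logarithmic integral by its positive part $\leq C_\varepsilon\int_V|u_k^+|^{2+\varepsilon}\,d\mu\leq C'_\varepsilon\|u_k^+\|_\infty^\varepsilon\|u_k^+\|_2^2$, combined with the $\lambda$-independent $L^\infty$-embedding $\|u_k^+\|_\infty\leq\mu_{\min}^{-1/2}\|u_k^+\|_{\mathcal{H}_\lambda}$ and the bound already obtained in Step 2, delivers $-K_V(u_k)\leq C$. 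Then the decomposition
\[
\|u_k\|_{\mathcal{H}_\lambda}^2=\|u_k^+\|_{\mathcal{H}_\lambda}^2+\|u_k^-\|_{\mathcal{H}_\lambda}^2-K_V(u_k)\leq c_0
\]
closes the proof. The subtle point I expect to be the main obstacle is \emph{Step 1}: one must verify carefully that the zero-extension of $w\in\mathcal{M}_\Omega$ actually satisfies the $\mathcal{M}_\lambda$ identities on the full graph — this relies on the precise interplay between $\partial\Omega$, the support of $w$, and the nonlocality of the graph gradient form $\Gamma$, which is exactly the structural difference emphasized in the introduction.
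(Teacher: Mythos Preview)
Your proposal is correct and follows essentially the same route as the paper: the inclusion $\mathcal{M}_\Omega\subset\mathcal{M}_\lambda$ gives $m_\lambda\le m_\Omega$, the Nehari identity then bounds $\|u_k^\pm\|_2$ uniformly in $\lambda$, and the estimate \eqref{eq2} upgrades this to a uniform $\mathcal{H}_\lambda$-bound. Your Step~3 is in fact unnecessary: since $-\tfrac12 K_V(u_k)\ge 0$ sits on the left-hand side of \eqref{eq2}, that same inequality already bounds $\|u_k^\pm\|_{\mathcal{H}_\lambda}^2-\tfrac12 K_V(u_k)$ directly, and summing the $\pm$ versions gives the bound on $\|u_k\|_{\mathcal{H}_\lambda}^2=\|u_k^+\|_{\mathcal{H}_\lambda}^2+\|u_k^-\|_{\mathcal{H}_\lambda}^2-K_V(u_k)$ without any extra work.
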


\begin{proof}\quad
Since $\mathcal{M}_{\Omega}\subset\mathcal{M}_{\lambda}$, it is easily seen that $m_{\lambda}\leq m_{\Omega}$ for any $\lambda>0$. Since $\{u_k\}\subset\mathcal{M}_\lambda$ and $\lim_{k\to\infty}J_\lambda(u_k)=m_\lambda$, we have
\begin{equation}\label{eqm-lambda'}
\begin{aligned}
\lim_{k\to+\infty}J_\lambda(u_k)=&\lim_{k\to+\infty}\left[J_\lambda(u_k)-\frac{1}{2}J'_\lambda(u_k)\cdot u_k^+-\frac{1}{2}J_\lambda'(u_k)\cdot u_k^-\right]\\
=&\lim_{k\to+\infty}\left(\frac{1}{2}\|u_k^+\|_2^2+\frac{1}{2}\|u_k^-\|_2^2\right)=m_\lambda\leq m_{\Omega}.
\end{aligned}
\end{equation}
By Lemma \ref{lemA2}, the H\"{o}lder's inequality and Young inequality, for any $\varepsilon\in(0,1)$, there exist $C_\varepsilon, C_\varepsilon', C_\varepsilon''>0$ such that
\[\begin{aligned}
\int_V|u_k^\pm|^2\log \vert u_k^\pm\vert^2d\mu\leq&\int_V(\vert u_k^\pm\vert^2\log \vert u_k^\pm\vert^2)^+d\mu\leq C_\varepsilon\int_V|u_k^\pm|^{2+\varepsilon}d\mu\\
\leq&C_\varepsilon\left(\int_V\vert u_k^\pm\vert^2d\mu\right)^{\frac{1}{2}}\left(\int_V\vert u_k^\pm\vert^{2(1+\varepsilon)}d\mu\right)^{\frac{1}{2}}\\
\leq&C_\varepsilon'\|u_k^\pm\|_2\|u_k^\pm\|_{\mathcal{H}_\lambda}^{1+\varepsilon}\\
\leq&\frac{1}{2}\|u_k^\pm\|_{\mathcal{H}_\lambda}^2+C_\varepsilon''\|u_k^\pm\|_2^{\frac{2}{1-\varepsilon}}.
\end{aligned}\]
Since $\{u_k\}\subset\mathcal{M}_\lambda$, we deduce that
\[\begin{aligned}
\|u_k^\pm\|_{\mathcal{H}_\lambda}^2-\frac{1}{2}K_V^k(x,y)\leq\frac{1}{2}\|u_k^\pm\|_{\mathcal{H}_\lambda}^2+C_\varepsilon''\|u_k^\pm\|_2^{\frac{2}{1-\varepsilon}}+\|u_k^\pm\|_2^2.
\end{aligned}\]
This together with (\ref{eqm-lambda'}) implies
\[\begin{aligned}
\lim_{k\to+\infty}\left(\|u_k^\pm\|_{\mathcal{H}_\lambda}^2-\frac{1}{2}K_V^k(u)\right)
\leq C_{\varepsilon}'''\left(m_\Omega^{\frac{1}{1-\varepsilon}}+m_\Omega\right).
\end{aligned}\]
From Lemma \ref{lemm-lambda>0} we know $m_\lambda>0$ and $m_\Omega>0$. Therefore it suffices to choose $c_0=C_{\varepsilon}'''\left(m_\Omega^{\frac{1}{1-\varepsilon}}+m_\Omega\right)$.
\end{proof}

Secondly, we have the following relation about $m_{\lambda}$ and $m_{\Omega}$.

\begin{lemma}\label{lemm-Ltom-O}\quad
$m_{\lambda}\to m_{\Omega}$ as $\lambda\to+\infty$.
\end{lemma}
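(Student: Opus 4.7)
The plan is to split the proof into the easy upper bound $m_\lambda \le m_\Omega$ and the substantially harder lower bound $\liminf_{\lambda\to+\infty} m_\lambda \ge m_\Omega$. The upper bound is essentially already observed in the proof of Lemma~\ref{lemunif-uppbdd}: every $u\in\mathcal{M}_\Omega$, extended by zero outside $\Omega\cup\partial\Omega$, lies in $\mathcal{M}_\lambda$ with $J_\lambda(u)=J_\Omega(u)$ (since $a\equiv 0$ on $\Omega$ and $u\equiv 0$ outside), so $m_\lambda\le m_\Omega$ for every $\lambda>0$.

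For the lower bound, I would pick an arbitrary sequence $\lambda_k\to+\infty$ and, by Lemma~\ref{lemm-lambda>0}, choose $u_k:=u_{\lambda_k}\in\mathcal{M}_{\lambda_k}$ with $J_{\lambda_k}(u_k)=m_{\lambda_k}$. Lemma~\ref{lemunif-uppbdd} gives the uniform bound $\|u_k\|_{\mathcal{H}_{\lambda_k}}\le c_0$, hence
\[
\int_V a(x)u_k^2\,d\mu \le \frac{c_0^2}{\lambda_k}\longrightarrow 0.
\]
Since $\{u_k\}$ is bounded in $H^1(V)$, by Lemma~\ref{lemA2} I can extract a subsequence with $u_k\rightharpoonup u_0$ in $H^1(V)$ and $u_k\to u_0$ in $L^p(V)$ for every $p\in[2,\infty]$ (and pointwise). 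Fatou's lemma combined with the displayed limit forces $\int_V a(x)u_0^2\,d\mu=0$, so $u_0(x)=0$ on $V\setminus\Omega$; in particular $u_0\equiv 0$ on $\partial\Omega$, and $u_0\in H_0^1(\Omega)$.

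Next I must rule out $u_0^\pm\equiv 0$. Applying Lemma~\ref{lemunif-lowbdd} to each $u_k$, its proof actually yields a uniform bound of the form $\|u_k^\pm\|_{2+\varepsilon}^{2+\varepsilon}\ge c>0$ (independent of $k$), because the estimate $\|u_k^\pm\|_{\mathcal{H}_{\lambda_k}}^2\le C_\varepsilon'\|u_k^\pm\|_{2+\varepsilon}^{2+\varepsilon}$ was derived before any interpolation with the $\mathcal{H}_{\lambda_k}$-norm. Combined with the strong $L^{2+\varepsilon}$ convergence this forces $\|u_0^\pm\|_{2+\varepsilon}\ge c^{1/(2+\varepsilon)}>0$, so $u_0^\pm\not\equiv 0$. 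By Lemma~\ref{coremptyset}, there is a unique pair $(s_0,t_0)$ with $s_0u_0^++t_0u_0^-\in\mathcal{M}_\Omega$. To control $(s_0,t_0)$ I would pass to the limit in the identity $J'_{\lambda_k}(u_k)\cdot u_k^\pm=0$: the gradient integral is weakly lower semicontinuous, the term $\lambda_k\int a(x)(u_k^\pm)^2\,d\mu$ is nonnegative and drops out in the limit inequality, while the logarithmic terms pass to their $\Omega$-counterparts by dominated convergence (using $|t^2\log t^2|\le C_\varepsilon(|t|^{2-\varepsilon}+|t|^{2+\varepsilon})$ and strong $L^p$ convergence). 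This yields $J'_\Omega(u_0)\cdot u_0^\pm\le 0$, hence by Lemma~\ref{corstleq1} we have $s_0,t_0\in(0,1]$.

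Finally, using the identity $J(u)=\tfrac12\|u^+\|_2^2+\tfrac12\|u^-\|_2^2$ valid on both $\mathcal{M}_{\lambda_k}$ and $\mathcal{M}_\Omega$ for minimizers of this form, I conclude
\[
m_\Omega \le J_\Omega(s_0u_0^++t_0u_0^-) = \tfrac{s_0^2}{2}\|u_0^+\|_2^2+\tfrac{t_0^2}{2}\|u_0^-\|_2^2 \le \tfrac12\|u_0^+\|_2^2+\tfrac12\|u_0^-\|_2^2 = \lim_{k\to\infty}\bigl(\tfrac12\|u_k^+\|_2^2+\tfrac12\|u_k^-\|_2^2\bigr) = \lim_{k\to\infty}m_{\lambda_k},
\]
where the last strong $L^2$ limit uses $u_k\to u_0$ in $L^2(V)$. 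Together with $m_{\lambda_k}\le m_\Omega$ this gives $m_{\lambda_k}\to m_\Omega$, and since the argument works for every sequence $\lambda_k\to+\infty$, the full limit $m_\lambda\to m_\Omega$ follows. The main technical obstacle is the passage to the limit in $J'_{\lambda_k}(u_k)\cdot u_k^\pm=0$ to obtain $J'_\Omega(u_0)\cdot u_0^\pm\le 0$: the nonlocal gradient form $\Gamma(u_k,u_k^\pm)$ couples values at neighbouring vertices, some of which straddle $\partial\Omega$, so one must carefully split the sum over edges inside $\Omega\cup\partial\Omega$ and edges leaving it, and exploit that $u_0$ vanishes off $\Omega\cup\partial\Omega$ together with pointwise convergence to discard the exterior contributions.
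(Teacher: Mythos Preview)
Your proposal is correct and follows essentially the same route as the paper: establish $m_\lambda\le m_\Omega$ via $\mathcal{M}_\Omega\subset\mathcal{M}_\lambda$, take minimizers $u_{\lambda_k}$, use the uniform $\mathcal{H}_{\lambda_k}$-bound from Lemma~\ref{lemunif-uppbdd} to extract a convergent subsequence, show the limit $u_0$ vanishes off $\Omega$, verify $J'_\Omega(u_0)\cdot u_0^\pm\le 0$ by lower semicontinuity, project onto $\mathcal{M}_\Omega$ via Lemmas~\ref{coremptyset}--\ref{corstleq1}, and close with the $L^2$ identity $J=\tfrac12\|u^+\|_2^2+\tfrac12\|u^-\|_2^2$ on the Nehari sets. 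The only noteworthy differences are cosmetic: you obtain $u_0|_{\Omega^c}=0$ directly via Fatou and $\int_V a\,u_k^2\,d\mu\to 0$, whereas the paper argues by contradiction; and you explicitly verify $u_0^\pm\not\equiv 0$ by extracting the uniform bound $\|u_k^\pm\|_{2+\varepsilon}^{2+\varepsilon}\ge c>0$ from the proof of Lemma~\ref{lemunif-lowbdd}, a point the paper leaves implicit before invoking Lemma~\ref{coremptyset}.
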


\begin{proof}\quad
By $m_{\lambda}\leq m_{\Omega}$ for any $\lambda>0$, passing to subsequence if necessary, we may take a sequence $\lambda_k\to+\infty$ such that
\begin{equation}\label{eq4.1}
\lim_{k\to\infty}m_{\lambda_k}=\eta\leq m_{\Omega},
\end{equation}
where $m_{\lambda_k}=\inf\limits_{u_k\in\mathcal{M}_{\lambda_k}}J_{\lambda_k}(u_k)$. Then, combining Lemma \ref{lemunif-lowbdd} and \eqref{eq2}, it is easy to get $\eta>0$. By Lemma \ref{lemunif-uppbdd}, we have that $\{u_{\lambda_k}\}$ is uniformly bounded in $\mathcal{H}_{\lambda_k}$. Consequently, $\{u_{\lambda_k}\}$ is also bounded in $H^1(V)$ and thus, up to a subsequence, there exists some $u_0\in H^1(V)$ such that
\begin{equation}\label{eq4.2}
\begin{cases}
u_{\lambda_k}\rightharpoonup u_0 ~ \mbox{ weakly in }H^1(V),\\
u_{\lambda_k}\to u_0 ~ \mbox{ point-wisely in } V,\\
u_{\lambda_k}\to u_0 ~ \mbox{ strongly in } L^p(V) \mbox{ for } p\in[2,+\infty].
\end{cases}
\end{equation}
We claim that $u_0\mid_{\Omega^c}=0$. In fact, if there exists a vertex $x_0\in\Omega^c$ such that $u_0(x_0)\neq0$. Since $u_{\lambda_k}\in\mathcal{M}_{\lambda_k}$, we have
\[\begin{aligned}
J_{\lambda_k}(u_{\lambda_k})&=\frac{1}{2}\|u_{\lambda_k}\|_{\mathcal{H}_{\lambda_k}}^2-\frac{1}{2}\int_Vu_{\lambda_k}^2\log u_{\lambda_k}^2d\mu\\
&\geq\frac{\lambda_k}{2}\int_Va(x)u_{\lambda_k}^2d\mu-\frac{1}{2}\int_V(u_{\lambda_k}^2\log u_{\lambda_k}^2)^+d\mu\\
&\geq\frac{\lambda_k}{2}\int_Va(x)u_{\lambda_k}^2d\mu-\frac{C_{\varepsilon}}{2}\int_V|u_{\lambda_k}|^{2+\varepsilon}d\mu\\
&\geq\frac{\lambda_k}{2}\sum_{x\in V}\mu(x)a(x)u^2_{\lambda_k}(x)-C'_{\varepsilon}\|u_{\lambda_k}\|_{H^1(V)}^{2+\varepsilon}\\
&\geq\frac{\lambda_k}{2}\mu_{\min}a(x_0)u^2_{\lambda_k}(x_0)-C''_{\varepsilon}.
\end{aligned}\]
Since $a(x_0)>0$, $u_{\lambda_k}(x_0)\to u_0(x_0)\neq0$ and $\lambda_k\to+\infty$, we get
\[
\lim_{k\to+\infty}J_{\lambda_k}(u_{\lambda_k})=+\infty,
\]
This is in contradiction with (\ref{eq4.1}). Hence the claim holds.

Since $u_0\mid_{\Omega^c}=0$, by the weak lower semi-continuity of the norm $\|\cdot\|_{H^1(V)}$ and Fatou's lemma, taking $u_{\lambda_k}^+$ as test function in equation (\ref{eq-eq1}), we get
\[\begin{aligned}
&\int_{\Omega\cup\partial\Omega}\Gamma(u_0^+)d\mu+\int_{\Omega}|u_0^+|^2d\mu-\int_{\{\Omega:|u_0^+|\leq1\}}|u_0^+|^2\log|u_0^+|^2d\mu-\frac{1}{2}K_\Omega^0(u)\\
\leq&\int_V\left(\Gamma(u_0^+)+|u_0^+|^2\right)d\mu-\int_{\{V:|u_0^+|\leq1\}}|u_0^+|^2\log|u_0^+|^2d\mu-\frac{1}{2}K_V^0(u)\\
\leq&\liminf_{k\to+\infty}\left[\int_V\left(\Gamma(u_{\lambda_k}^+)+\left(\lambda_ka(x)+1\right)|u_{\lambda_k}^+|^2\right)d\mu-\int_{\{V:|u_{\lambda_k}^+|\leq1\}}|u_{\lambda_k}^+|^2\log|u_{\lambda_k}^+|^2d\mu-\frac{1}{2}K_V^{\lambda_k}(u)\right]\\
=&\liminf_{k\to+\infty}\left[\int_V|u_{\lambda_k}^+|^2d\mu+\int_{\{V:|u_{\lambda_k}^+|>1\}}|u_{\lambda_k}^+|^2\log|u_{\lambda_k}^+|^2d\mu\right]\\
=&\int_V|u_0^+|^2d\mu+\int_{\{V:|u_0^+|>1\}}|u_0^+|^2\log|u_0^+|^2d\mu\\
=&\int_{\Omega}|u_0^+|^2d\mu+\int_{\{\Omega:|u_0^+|>1\}}|u_0^+|^2\log|u_0^+|^2d\mu,
\end{aligned}\]
where
\begin{gather*}
K_\Omega^0(u)=\sum_{x\in\Omega\cup\partial\Omega}\sum_{y\sim x}\left[u_0^+(x)u_0^-(y)+u_0^-(x)u_0^+(y)\right],\\
K_V^0(u)=\sum_{x\in V}\sum_{y\sim x}\left[u_0^+(x)u_0^-(y)+u_0^-(x)u_0^+(y)\right],\\
K_V^{\lambda_k}(u)=\sum_{x\in V}\sum_{y\sim x}\left[u_{\lambda_k}^+(x)u_{\lambda_k}^-(y)+u_{\lambda_k}^-(x)u_{\lambda_k}^+(y)\right].
\end{gather*}
Then
\begin{equation*}\label{equ00+}
J'_\Omega(u_0)\cdot u_0^+=\int_{\Omega\cup\partial\Omega}\Gamma(u_0^+)d\mu-\int_{\Omega}|u_0^+|^2\log|u_0^+|^2d\mu-\frac{1}{2}K_\Omega^0(u)\leq0.
\end{equation*}
Similarly, it holds that
\begin{equation*}\label{equ00-}
J'_\Omega(u_0)\cdot u_0^-=\int_{\Omega\cup\partial\Omega}\Gamma(u_0^-)d\mu-\int_{\Omega}|u_0^-|^2\log|u_0^-|^2d\mu-\frac{1}{2}K_\Omega^0(u)\leq0.
\end{equation*}
In view of Lemma \ref{coremptyset} and Lemma \ref{corstleq1}, there exist $s, t\in(0,1]$ such that $\widetilde{u}_0=su_0^++tu_0^-\in\mathcal{M}_{\Omega}$. Then
\[\begin{aligned}
m_{\Omega}\leq&J_{\Omega}(\widetilde{u}_0)=J_{\Omega}(\widetilde{u}_0)-\frac{1}{2}J'_{\Omega}(\widetilde{u}_0)\cdot(su_0^+)-\frac{1}{2}J'_{\Omega}(\widetilde{u}_0)\cdot(tu_0^-)\\
=&\frac{s^2}{2}\|u_0^+\|_{L^2(\Omega)}^2+\frac{t^2}{2}\|u_0^-\|_{L^2(\Omega)}^2\\
\leq&\liminf_{k\to\infty}\left[\frac{1}{2}\|u_{\lambda_k}^+\|_2^2+\frac{1}{2}\|u_{\lambda_k}^-\|_2^2\right]\\
=&\liminf_{k\to\infty}\left[J_{\lambda_k}(u_{\lambda_k})-\frac{1}{2}J'_{\lambda_k}(u_{\lambda_k})\cdot u_{\lambda_k}^+-\frac{1}{2}J'_{\lambda_k}(u_{\lambda_k})\cdot u_{\lambda_k}^-\right]\\
=&\liminf_{k\to+\infty}J_{\lambda_k}(u_{\lambda_k})=\eta\le m_{\Omega}.
\end{aligned}\]
Hence, $\lim_{\lambda\to+\infty}m_{\lambda}=m_{\Omega}$.
This completes the proof.
\end{proof}

\begin{proof}[Proof of Theorem \ref{th1.3}]\quad
Assume that $u_{\lambda_k}\in\mathcal{M}_{\lambda_k}$ satisfies $J_{\lambda_k}(u_{\lambda_k})=m_{\lambda_k}$. We shall prove that $u_{\lambda_k}$ converges in $H^1(V)$ to a least energy sign-changing solution $u_0$ of equation $(\ref{eq-eq2})$ along a subsequence.

Lemma \ref{lemunif-uppbdd} gives that $u_{\lambda_k}\in\mathcal{H}_{\lambda_k}$ is uniformly bounded. Consequently, we have that $\{u_{\lambda_k}\}$ is also bounded in $H^1(V)$. Therefore, we can assume that for any $p\in[2,\infty)$, $u_{\lambda_k}\to u_0$ in $L^p(V)$ and $u_{\lambda_k}\rightharpoonup u_0$ in $H^1(V)$. Moreover, in view of $u_0\in\mathcal{N}_\Omega$ and we get from Lemma \ref{lemunif-lowbdd} that $u_0\not\equiv0$. As proved in Lemma \ref{lemm-Ltom-O}, we can prove that $u_0\mid_{\Omega^c}=0$. Then it suffices to show that, as $k\to+\infty$, we have $\lambda_k\int_Va(x)\vert u_{\lambda_k}^\pm\vert^2d\mu\to0$ and $\int_V\Gamma(u_{\lambda_k}^\pm)d\mu\to\int_V\Gamma(u_0^\pm)d\mu$.
If not, we may assume that
\[
\lim\limits_{k\to+\infty}\lambda_k\int_Va(x)\vert u_{\lambda_k}^\pm\vert^2d\mu=\delta>0.
\]
Since $u_0\mid_{\Omega^c}=0$, by weak lower semi-continuity of the norm $\|\cdot\|_{H^1(V)}$ and Fatou's lemma, taking $u_{\lambda_k}^+$ as test function in equation (\ref{eq-eq1}), we get
\[\begin{aligned}
&\int_{\Omega\cup\partial\Omega}\Gamma(u_0^+)d\mu+\int_{\Omega}|u_0^+|^2d\mu-\int_{\{\Omega:|u_0^+|\leq1\}}|u_0^+|^2\log|u_0^+|^2d\mu-\frac{1}{2}K_\Omega^0(u)\\
<&\int_V\left(\Gamma(u_0^+)+|u_0^+|^2\right)d\mu+\delta-\int_{\{V:|u_0^+|\leq1\}}|u_0^+|^2\log|u_0^+|^2d\mu-\frac{1}{2}K_V^0(u)\\
\leq&\liminf_{k\to+\infty}\left[\int_V\left(\Gamma(u_{\lambda_k}^+)+\left(\lambda_ka(x)+1\right)|u_{\lambda_k}^+|^2\right)d\mu-\int_{\{V:|u_{\lambda_k}^+|\leq1\}}|u_{\lambda_k}^+|^2\log|u_{\lambda_k}^+|^2d\mu-\frac{1}{2}K_V^{\lambda_k}(u)\right]\\
=&\liminf_{k\to+\infty}\left[\int_V|u_{\lambda_k}^+|^2d\mu+\int_{\{V:|u_{\lambda_k}^+|>1\}}|u_{\lambda_k}^+|^2\log|u_{\lambda_k}^+|^2d\mu\right]\\
=&\int_V|u_0^+|^2d\mu+\int_{\{V:|u_0^+|>1\}}|u_0^+|^2\log|u_0^+|^2d\mu\\
=&\int_{\Omega}|u_0^+|^2d\mu+\int_{\{\Omega:|u_0^+|>1\}}|u_0^+|^2\log|u_0^+|^2d\mu,
\end{aligned}\]
which implies that
\begin{equation}\label{equ000+}
J'_\Omega(u_0)\cdot u_0^+=\int_{\Omega\cup\partial\Omega}\Gamma(u_0^+)d\mu-\int_{\Omega}|u_0^+|^2\log|u_0^+|^2d\mu-\frac{1}{2}K_\Omega^0(u)<0.
\end{equation}
Similarly, it holds that
\begin{equation}\label{equ000-}
J'_\Omega(u_0)\cdot u_0^-=\int_{\Omega\cup\partial\Omega}\Gamma(u_0^-)d\mu-\int_{\Omega}|u_0^-|^2\log|u_0^-|^2d\mu-\frac{1}{2}K_\Omega^0(u)<0.
\end{equation}
By similar arguments as above, if
\[
\lim\limits_{k\to+\infty}\int_V\Gamma(u_{\lambda_k}^\pm)d\mu>\int_V\Gamma(u_0^\pm)d\mu,
\]
we also have (\ref{equ000+}) and (\ref{equ000-}).

In view of Lemma \ref{coremptyset} and Lemma \ref{corstleq1}, there exist two constants $s, t\in(0,1)$ such that $\widetilde{u}_0=su_0^++tu_0^-\in\mathcal{M}_{\Omega}$. Consequently, we have
\[\begin{aligned}
m_{\Omega}\leq&J_{\Omega}(\widetilde{u}_0)=J_{\Omega}(\widetilde{u}_0)-\frac{1}{2}J'_{\Omega}(\widetilde{u}_0)\cdot (su_0^+)-\frac{1}{2}J'_{\Omega}(\widetilde{u}_0)\cdot (tu_0^-)\\
=&\frac{s^2}{2}\|u_0^+\|_{L^2(\Omega)}^2+\frac{t^2}{2}\|u_0^-\|_{L^2(\Omega)}^2\\
<&\frac{1}{2}\|u_0^+\|_2^2+\frac{1}{2}\|u_0^-\|_2^2\\
\leq&\liminf_{k\to+\infty}\left[\frac{1}{2}\|u_{\lambda_k}^+\|_2^2+\frac{1}{2}\|u_{\lambda_k}^-\|_2^2\right]\\
=&\liminf_{k\to+\infty}\left[J_{\lambda_k}(u_{\lambda_k})-\frac{1}{2}J'_{\lambda_k}(u_{\lambda_k})\cdot u_{\lambda_k}^+-\frac{1}{2}J'_{\lambda_k}(u_{\lambda_k})\cdot u_{\lambda_k}^-\right]\\
=&\liminf_{k\to+\infty}J_{\lambda_k}(u_{\lambda_k})\\
=&\liminf_{k\to+\infty}m_{\lambda_k}=m_{\Omega},
\end{aligned}\]
which leads to a contradiction. Hence, we obtain that $u_{\lambda_k}\to u_0$ in $H^1(V)$ and $u_0$ is a least energy sign-changing solution of problem (\ref{eq-eq2}).
\end{proof}

\section*{Declaration of competing interest}
The authors declare that they have no known competing financial interests or personal relationships that could have appeared to influence the work reported in this paper.

\section*{Data availability}
No data was used for the research described in the article.

\section*{Acknowledgements}
The research of Xiaojun Chang was supported by National Natural Science Foundation of China (Grant No.11971095), while Duokui Yan was supported by National Natural Science Foundation of China (Grant No.11871086).

\section*{References}
\biboptions{sort&compress}

\end{document}